\theoremstyle{plain}
\newtheorem{theorem}{Theorem}[section]
\newtheorem{lemma}[theorem]{Lemma}
\newtheorem{corollary}[theorem]{Corollary}
\newtheorem{proposition}[theorem]{Proposition}
\theoremstyle{definition}
\newtheorem{definition}[theorem]{Definition}
\newtheorem{example}[theorem]{Example}
\theoremstyle{remark}
\newtheorem{remark}[theorem]{Remark}
\numberwithin{equation}{section}
\newcommand{\CC}{\mathbb{C}}
\newcommand{\QQ}{\mathbb{Q}}
\newcommand{\RR}{\mathbb{R}}
\newcommand{\ZZ}{\mathbb{Z}}
\newcommand{\NN}{\mathbb{N}}
\newcommand{\Hom}{{\rm Hom}}
\newcommand{\ep}{\epsilon}
\newcommand{\mc}{\mathcal}
\newcommand{\ms}{\mathscr}
\newcommand{\mbb}{\mathbb}
\newcommand{\lra}{\longrightarrow}
\newcommand{\letbe}{\colonequals}
\newcommand{\largewedge}{\mbox{\Large $\wedge$}}
\begin{document}

\title{Equivariant cohomology of torus orbifolds}

\author{Alastair Darby}
\address{Department of Mathematical Sciences, Xi'an Jiaotong-Liverpool University, 111 Ren'ai Road,
Suzhou, 215123, Jiangsu Province, China}
\curraddr{}
\email{Alastair.Darby@xjtlu.edu.cn}

\author{Shintaro Kuroki}
\address{Department of Applied Mathematics, Okayama University of Science, 1-1 Ridai-cho Kita-ku Okayama-shi Okayama 700-0005, Japan}
\curraddr{}
\email{kuroki@xmath.ous.ac.jp}

\author{Jongbaek Song}
\address{School of Mathematics, KIAS, 85 Hoegiro Dongdaemun-gu, Seoul 02455, Republic of Korea}
\curraddr{}
\email{jongbaek.song@gmail.com}


%
\subjclass[2010]{Primary 14M25, 55N91, 57R18; Secondary 13F55}
%


\keywords{GKM-orbifold, torus orbifold, equivariant cohomology, GKM-theory, face ring}

\begin{abstract}
We calculate the integral equivariant cohomology, in terms of generators and relations, of locally standard torus orbifolds whose odd degree ordinary cohomology vanishes. We begin by studying GKM-orbifolds, which are more general, before specialising to half-dimensional torus actions.
\end{abstract}

\maketitle

\section{Introduction}
The interest in the integral cohomology of orbifolds stems from the subtleties that appear when in comparison with their manifold counterparts. The main, and first, example of this is \emph{weighted projective space}. It can easily be seen that its rational cohomology is the same as that of ordinary projective space. Kawasaki~\cite{Ka} proved, surprisingly, that integrally their cohomologies are also additively isomorphic but that they have different product structures. Weighted projective spaces, when thought of as toric varieties, admit a natural half-dimensional compact torus action whose equivariant cohomology was studied in~\cite{BFR}. A similar phenomenon is observed whereby the rational equivariant cohomology is the same as for ordinary projective space but the integral equivariant cohomology distinguishes them. 
In this paper we consider a much wider class of orbifolds with torus action where this phenomenon persists, namely GKM-orbifolds and torus orbifolds, and calculate their integral equivariant cohomology.

Goresky, Kottwitz and MacPherson~\cite{GKM} showed that a lot could be said about the equivariant cohomology of a wide range of spaces with compact torus action by considering a combinatorial approximation of the space. More specifically, if a space $X$ with compact torus $T^k$-action is \emph{equivariantly formal} (which is implied if $H^{odd}(X)=0$), then each closed one-dimensional orbit is a copy of a 2-sphere that is rotated according to some element of $\Hom(T^k,S^1)$. They proved that the equivariant cohomology $H^*_{T^k}(X;\RR)$ (note the real coefficients) is encoded in the 1-skeleton, the union of the zero- and one-dimensional orbits. It is isomorphic to the algebra of \emph{piecewise polynomials} on the 1-skeleton, i.e.\ the algebra given by attaching to each fixed point of $X$ elements of the polynomial algebra $H^*(BT^k;\RR)$ so that if two fixed points belong to a 2-sphere their polynomials agree modulo the element of $\Hom(T^k,S^1)$ attached to that sphere.

The 1-skeleton can be modeled on a \emph{labelled graph} and Guillemin and Zara~\cite{GZ} abstract this idea, studying these graphs as objects of interest in their own right, and coin the terms GKM-manifold, GKM-orbifold and GKM-graph. The algebra of piecewise polynomials on the 1-skeleton described beforehand is extended to the abstract labelled graphs and is often called the \emph{equivariant cohomology of the graph}. A similar technique can be used for the equivariant cohomology of GKM-manifolds with integer coefficients where the polynomials used now lie in $H^*(BT^k;\ZZ)$. 

Torus manifolds, first studied in~\cite{HM} and consequently in~\cite{MP,MMP}, can be considered as a special case of GKM-manifolds where the the torus acts effectively with exactly half the dimension of the manifold. This has the added bonus that we can explicitly give generators and relations for the integral equivariant cohomology. Examples of torus manifolds include toric manifolds (defined as compact non-singular toric varieties), quasitoric manifolds and even-dimensional spheres.

The integral equivariant cohomology of smooth toric varieties is known to be given by the \emph{face ring} of the corresponding fan and for quasitoric manifolds the face ring of the quotient simple polytope. For the wider class of torus manifolds it was proved by Masuda and Panov~\cite{MP} that the integral equivariant cohomology is isomorphic to the face ring of an appropriate simplicial poset. In~\cite{MMP} they use \emph{Thom classes} of the associated \emph{torus graph} to explicitly give the generators and relations for the integral equivariant cohomology of the graph. This is isomorphic to the integral equivariant cohomology of the torus manifold if its ordinary cohomology vanishes in all odd degrees.

When we move from manifolds to orbifolds the picture slightly changes. In the case of singular toric varieties, including toric varieties having orbifold singularities, it was proved in~\cite{BFR,Fr} that its equivariant cohomology with integer coefficients is given by the ring $PP[\Sigma]$ of piecewise polynomials on its fan $\Sigma$ if its ordinary odd degree cohomology vanishes. 
Note in particular that the ordinary cohomology of a toric manifold is concentrated in even degrees, hence its equivariant cohomology is isomorphic to $PP[\Sigma]$, which is isomorphic to the face ring of the fan in this case. However,  this is not true for orbifolds in general. 
In \cite{BSS, BNSS, KMZ}, several verifiable conditions for toric orbifolds to have vanishing odd degree cohomology are studied, and the authors of \cite{BSS}  show that, under the condition of vanishing odd degree cohomology, the equivariant cohomology of a projective toric orbifold can be realised as a subring of the usual face ring of the fan that satisfies an \emph{integrality condition}. 

GKM-orbifolds, as first defined in~\cite{GZ}, are orbifold versions of GKM-manifolds. The closed one-dimensional orbits are now \emph{spindles} (see Example~\ref{ex_spindle}). Examples of GKM-orbifolds include weighted Grassmanians~\cite{AM, CR-wgr} and weighted flag varieties~\cite{IB-wfl}. In Section~2 we define an abstract orbifold GKM-graph, similar to~\cite{GZ}, which is in some sense a rational version of the manifold case. After showing how a GKM-orbifold produces such a graph we then define the integral cohomology of an orbifold GKM-graph $H^*_T(\Gamma,\alpha)$ and prove that the integral equivariant cohomology of the GKM-orbifold is isomorphic to $H^\ast_T(\Gamma,\alpha)$ when its odd degree cohomology vanishes (amongst other conditions).

In Section~3, as in the manifold case, we concentrate on half-dimensional torus actions. We work entirely combinatorially in this section by initially considering the orbifold analogue of a torus graph. We then define the \emph{weighted face ring} of an orbifold torus graph, which is a polynomial ring given in terms of generators and relations using integral linear combinations of \emph{rational Thom classes}, and show that this weighted face ring is isomorphic to the equivariant cohomology of the orbifold torus graph. 

We move back towards geometry in Section~4, where we consider \emph{locally standard} GKM-orbifolds. This leads to objects known as \emph{torus orbifolds} (see~\cite{GKRW,KMZ}), orbifolds with a half-dimensional torus action, whose quotient space $Q$ is a \emph{manifold with faces}. Each such orbifold can be characterised by labelling the codimension-one faces of $Q$ which is \emph{dual} to the the corresponding torus graph. From this characterisation there is a canonical method to build a torus orbifold using the \emph{quotient construction}, which we show reproduces the original torus orbifold. Using results from the previous sections, we give the integral equivariant cohomology of this torus orbifold when its odd degree cohomology vanishes. 

In Section~5 we give an explicit formula for the integral equivariant cohomology of all 4-dimensional torus orbifolds for which there is a complete obstruction to the vanishing of the odd degree cohomology.

Throughout this paper, cohomology is taken with integral coefficients, unless stated otherwise. As is natural, we make the usual identification $\Hom(S^1,T^k)=H_2(BT^k)\cong \ZZ^k$ and write its standard integral basis as $\{ \ep_1,\dots,\ep_k\}$, where $\ep_i$ is the inclusion into the $i$-th copy of $S^1$ in $T^k\letbe S^1\times\dots\times S^1$. The dual of this notion gives us the identification $\Hom(T^k,S^1)=H^2(BT^k)\cong \ZZ^k$ with its dual integral basis $\{ \ep_1^*,\dots,\ep_k^*\}$, where the elements now relate to projections.

One other way to consider the identification $\Hom(T^k,S^1)=H^2(BT^k)$ is to consider elements of $\Hom(T^k,S^1)$ as complex one-dimensional $T^k$-representations. These can be thought of as $T^k$-equivariant complex line bundles over a point and taking the equivariant first Chern classes of these bundles produces the identification. As $T^k$ is abelian, the one-dimensional complex representations form all of the irreducible representations.

\subsubsection*{Acknowledgements}
The third author would like to thank Matthias Franz for his valuable comments. He is also grateful to Tony Bahri and Soumen Sarkar for their helpful suggestions and encouragement.

The first author was supported by XJTLU RDF-17-02-50.
The second author was supported by JSPS KAKENHI 
Grant Number 17K14196. 
The third author has been supported by Basic Science Research Program through the National Research Foundation of Korea (NRF) funded by the Ministry of Education (NRF-2018R1D1A1B07048480) and a KIAS Individual Grant (MG076101) at Korea Institute for Advanced Study. 

\section{GKM-orbifolds}
We begin with a brief introduction of torus actions on orbifolds. We refer to~\cite[Section~2]{LT} and~\cite[Section~2]{GKRW} for more details regarding Lie group actions on orbifolds.

Let $\mathcal{X}=(X, \mathcal{U})$  be a $2n$-dimensional orbifold, 
where $\mathcal{U}$ denotes an orbifold atlas on the underlying topological 
space $X$. To be more precise, 
$\mathcal{U}$ consists of the maximal collection of orbifold charts 
\begin{equation*}\label{eq_orbifold_chart}
\left\{ (\tilde{U}, G, \phi \colon \tilde{U} \longrightarrow U \subseteq X)\right\}
\end{equation*}
which covers $X$, where $\tilde{U}$ is an open subset 
of $\RR^{2n}$, $G$ is a finite subgroup of $O(2n)$ and $\phi$ is a 
$G$-equivariant map which induces 
a homeomorphism from $\tilde{U}/G$ to an open subset $U$ of $X$.

Suppose that there is an action of a $k$-dimensional compact 
torus $T^k$ on $X$, for some $k\leq n$, with nonempty fixed point set. 
Then we may consider a $T^k$ invariant neighbourhood $U_p$ 
of a fixed point $p$ in $X$. Each 
element $t\in T^k$ 
induces a smooth map $L_t\colon U_p \to U_p$, which yields a smooth lift 
$\tilde{L}_t \colon \tilde{U}_p \to  \tilde{U}_p$ such that the following diagram 
commutes:
\begin{equation*}\label{eq_lifting_action_to_upstairs}
\begin{tikzcd}
\tilde{U}_p \arrow{r}{\tilde{L}_t}\arrow{d}{/G_p}  & \tilde{U}_p \arrow{d}{/G_p} \\
U_p \arrow{r}{L_t} &  U_p .
\end{tikzcd}
\end{equation*}
Notice that $(\tilde{U}_p, G_p, \tilde{U}_p \xrightarrow{/G_p} U_p)$ 
forms an orbifold chart, and $G_p$ is called the \emph{local group} around $p$.  

Now we consider the tangent space $T_p\tilde{U}(\letbe T_p \tilde{U}_p)$ 
of $\tilde{U}_p$ at $p$ and the tangential 
representation with respect to the action of $T^k$ on $\tilde{U}_p$. 
Since $T^k$ is abelian, the tangent space $T_p \tilde{U}$ is decomposed into complex 
$1$-dimensional representations
\begin{equation}\label{eq_tangential_repn}
T_p\tilde{U}\,\cong\,\bigoplus_{i=1}^n V(\alpha_{i, p}),
\end{equation}
where $V(\alpha_{i,p})$ denotes a complex $1$-dimensional 
$T^k$-representation with weight 
$\alpha_{i,p}\in {\rm Hom} (T^k, S^1)$.

\begin{definition}\label{def_GKM_orb}
A closed $2n$-dimensional oriented orbifold 
$\mathcal{X}=(X, \mathcal{U})$ with an action of a
$k$-dimensional torus $T^k$, with $k\leq n$, is called 
a \emph{GKM-orbifold} if 
\begin{enumerate}
\item there are finitely many $T^k$-fixed 
points and connected components of 1-dimensional orbits;
\item the weights $\{\alpha_{1,p}, \dots, \alpha_{n, p} \} \subset  \Hom(T^k, S^1)$ 
of the tangential representation at each fixed point $p\in X$, 
as in \eqref{eq_tangential_repn}, are pairwise linearly independent.
\end{enumerate}
\end{definition}

\begin{remark}
Though Guillemin and Zara~\cite{GZ} require a $T^k$-invariant almost 
complex structure for their GKM-manifolds and GKM-orbifolds, we 
shall not assume the existence of an invariant almost complex structure. 
Also see Remark \ref{rmk_almost_complex_str}. 
\end{remark}

\begin{example}[Spindle]\label{ex_spindle}
Consider the quotient space $$S^{2}(m,n)\,\letbe\,S^3/S^1\langle m,n\rangle,$$ 
where we identity two points $(z_1, z_2)$ and $(z_1', z_2')$ in $S^3\subset \CC^2$ if 
\begin{equation}\label{eq_equiv_class_spindle}
(z_1', z_2')\,=\,(t^mz_1, t^nz_2)
\end{equation}
for some positive integers $m,n$ and $t\in S^1$. We denote by $[z_1:z_2]$ the equivalence class coming from~\eqref{eq_equiv_class_spindle}. 
Though $S^{2}(m,n)$ is equipped with an orbifold structure
induced from the equivalence relation, its underlying topological space 
is homeomorphic to $S^2$. Hence, we may think of the $S^1$-action on 
$S^2(m,n)$ being induced from the standard $T^2/S^1\langle m,n\rangle\cong S^1$-action on $S^2$, 
whose fixed point set consists of two isolated points that are connected by 
a connected component of $1$-dimensional orbits. We refer to 
\cite[Example 1.2.1]{GZ} and \cite{DKS-proc}. 
\end{example}

The two assumptions in Definition \ref{def_GKM_orb} 
lead us to understand the set of 
$1$-dimensional orbits as a union of $2$-spheres which are connected only
by fixed points. This brings about a graph by associating the set of fixed points 
and the set of $1$-dimensional orbits with the set of vertices 
and the set of edges, respectively. 

Furthermore, the tangential 
representation \eqref{eq_tangential_repn} together with 
the order of the local group around a fixed point 
produces a certain labelling on each edge. To be more precise, the orientability of $\mc{X}$ allows the local group $G_p$ to be taken as a finite subgroup of 
$SO(2n)$, and by~\cite[Proposition~2.8]{GKRW}, there exists a Lie group $\tilde{T}$ such that $\tilde{T}$ acts on $\tilde{U}_p$ and $\tilde{T}$ is an extension of $T^k$ by $G_p$. We can also say that $G_p$ commutes with every connected subgroup of $\tilde{T}$ using~\cite[Corollary~2.9]{GKRW}.  Since the identity component $\tilde{T}^o$ (which as a connected, compact, abelian Lie group is therefore a torus) of $\tilde{T}$ has a pairwise linear independent representation  $\CC^{n}\simeq \bigoplus_{i=1}^{n} V(\epsilon_{i}^{*})$, any two-dimensional complex representation $\epsilon_{i}^{*}\times \epsilon_{j}^{*}:\tilde{T}^o\to U(2)$ does not commute with the elements of U(2) except in the case of a maximal torus $T^2\simeq {\rm im} (\epsilon_{i}^{*}\times \epsilon_{j}^{*})$.
This implies that the centraliser of $\tilde{T}^o$ in $SO(2n)$ coincides with a maximal torus and that $G_p$ must be a subgroup of this. Therefore, $G_{i,p}$, the projection of $G_p$ onto its $i$-th coordinate, is a subgroup of $SO(2)$. Here, we notice that the order $|G_p|$ of $G_p$ is a multiple of $|G_{i,p}|$, for each $i=1, \dots, n$.


To get a labelling on each edge, which encodes the given orbifold structure, 
we consider the following composition:
\begin{equation}\label{eq_orb_HOM}
\begin{tikzcd}
T^k \arrow{r}{\alpha_{i,p}} & S^1\arrow{r}{\xi_{i,p}} &S^1/G_{i,p},
\end{tikzcd}
\end{equation}
where $\alpha_{i,p}\in \Hom(T^k, S^1)$ and $\xi_{i,p}(e^{2\pi i x}) = [e^{2\pi i x}]$.
Identifying $\Hom(T^k, S^1)$ with $H^2(BT^k)$, 
the composition $\xi_{i,p}\circ \alpha_{i,p}$ in \eqref{eq_orb_HOM} defines 
an element in $H^2(BT^k; \QQ)$.

Motivated by this geometric interpretation, we 
give the following definition 
of an \emph{abstract orbifold GKM-graph}. 

\begin{definition}\label{def_orb_GKM_graph}
For two positive integers $k$ and $n$, with $k\leq n$, 
an \emph{orbifold GKM-graph} 
is a triple $(\Gamma, \alpha, \theta)$ defined as follows:
\begin{enumerate}
\item $\Gamma$ is an $n$-valent graph with the set 
$\mathcal{V}(\Gamma)$ of vertices and the set 
$\mathcal{E}(\Gamma)$ of oriented edges. 
\item $\alpha \colon \mathcal{E}(\Gamma) \to H^2(BT^k;\QQ)$ is a map
such that 
\begin{enumerate}
\item the set of vectors $\alpha(\mathcal{E}_p(\Gamma))$ are 
pairwise linearly independent for every $p\in \mathcal{V}(\Gamma)$, 
where $\mathcal{E}_p(\Gamma)$ is the set of outgoing edges from $p$;
\item for an oriented edge $e\in \mathcal{E}(\Gamma)$, 
there are positive integers 
$r_{e}$ and $r_{\overline{e}}$ such that 
$r_{e}\alpha(e)=\pm r_{\overline{e}}\alpha(\overline{e})
\in H^2(BT^k)$, where $\overline{e}$ denotes the edge 
$e$ with the reversed orientation. 
\end{enumerate}
\item $\theta$ is a collection of bijections 
$$\theta_e \colon \mathcal{E}_{i(e)}(\Gamma) \longrightarrow \mathcal{E}_{t(e)}(\Gamma),$$
such that 
$c_{e,e'}(\alpha(\theta_{e}(e'))-\alpha(e')) 
\equiv 0\mod r_e\alpha(e)(=\pm r_{\overline{e}}\alpha(\overline{e}))  
\in H^2(BT^k)$, for some $c_{e,e'} \in \ZZ\setminus \{0\}$, where $i(e)$ is the initial vertex and $t(e)$ is the terminal vertex of $e\in \mathcal{E}(\Gamma)$.
\end{enumerate}
The function $\alpha$ and the collection $\theta$ are called an \emph{axial function} 
and a \emph{connection} on $\Gamma$, respectively. 
\end{definition}


Given a GKM-orbifold $\mc{X}$ we immediately obtain a labelled graph $(\Gamma,\alpha)$. We can choose a  connection $\theta$ on $\Gamma,\alpha$ in a similar way to~\cite{GZ}: let $e\in \mathcal{E}(\Gamma)$, with $p=i(e)$ and $q=t(e)$, and $\mathcal{E}_p(\Gamma)=\{ e_1^p=e,e_2^p,\dots,e_n^p\}$ and $\mathcal{E}_q(\Gamma)=\{ e_1^q=\bar{e},e_2^q,\dots,e_n^q\}$. Then the restriction to $S_e$ (the spindle associated to $e$) of the tangent orbibundle to $\mc{X}$ splits equivariantly as a sum of line orbibundles $\bigoplus_{i=1}^n L_i$ and we can relabel the elements in $\mathcal{E}_p(\Gamma)$ and $\mathcal{E}_q(\Gamma)$ so that $L_i|_p=T_pX_{e_i^p}$ and $L_i|_q=T_qX_{e_i^q}$. From this we get the identification $e_i^p\mapsto e_i^q$ which defines a connection on $\Gamma,\alpha$ and produces an orbifold GKM-graph. The integer $c_{e,e_i^p}$ can then be seen to be the Chern number of the vector bundle $\varphi^*L_i$ where $\varphi\colon \CC P^1\to S_e=S^2(m,n)$ (for some $m,n\in \ZZ\setminus 0$) is defined by $\varphi [z_1:z_2]=[z_1^m:z_2^n]$.

\begin{remark}
If we choose $r_e$ and $r_{\overline{e}}$ as the minimal integers such that $r_{e}\alpha(e)=\pm r_{\overline{e}}\alpha(\overline{e})$, then
a multiple of $r_e$ coincides with the order of the local group $G_p$ 
of an orbifold chart around $p=i(e)$. 
In particular, if a fixed point $p\in X^T \cong \mathcal{V}(\Gamma)$ 
is a smooth point, i.e.\ an orbifold chart 
around $p$ has the trivial local group, then we may choose  
$r_{e}$ to be $+1$ 
for each $e\in \mathcal{E}_p(\Gamma)$. 
\end{remark}
\begin{remark}\label{rmk_almost_complex_str}
An \emph{almost complex structure} on $\mathcal{X}$
is an endomorphism $J\colon T\mathcal{X} \to T\mathcal{X}$ 
satisfying $J^2=-id$. If a GKM-orbifold is equipped with an 
$T^k$-invariant almost complex structure $J$, then the orientation 
induced from $J$ forces the associated axial function to
satisfy $r_{e}\alpha(e)=- r_{\overline{e}}\alpha(\overline{e})
\in H^2(BT^k)$, as in  \cite[Definition 2.1.1]{GZ}.
\end{remark}


\begin{example}[Spindles] Here, we calculate two orbifold GKM-graphs $(\Gamma, \alpha, \theta)$ for a spindle $S^2(m,n)$ with respect to two different $S^1$-actions on it. 
\begin{enumerate}
\item If we suppose that the pair $(m,n)$ are coprime integers, then there exist $a,b\in \ZZ$ such that $mb-na=1$. We can then write the $S^1$-action on the spindle $S^2(m,n)$ in Example~\ref{ex_spindle} as
\[
t\cdot[z_1: z_2]\,=\,[t^az_1:t^bz_2],
\]
for a choice of $a$ and $b$ as above, since $t\mapsto [t^a :t^b]$ defines an isomorphism $S^1\to T^2/S^1\langle m,n\rangle$. Note that this action is effective and has two fixed points $[1:0]$ and $[0:1]$. The graph $\Gamma$ is given by the two vertices connected by an edge, and the connection $\theta$ is the only one. To obtain the axial function $\alpha$, we see that
\[
t\cdot [1:z_2]\,=\, [t^a:t^bz_2]\,=\, [(t^{-\frac{a}{m}})^{m}t^{a}: (t^{-\frac{a}{m}})^{n}t^{b}z_{2}]\,=\, [1:t^{\frac{-an+bm}{m}}z_{2}]\,=\,  [1:t^{\frac{1}{m}}z_2],
\]
where the second equality holds because of the relation \eqref{eq_equiv_class_spindle}.
Therefore the weight of the orbifold tangential representation $T_{[1:0]}U=V(\alpha_1)/\ZZ_m$ around 
the fixed point $[1:0]$ is given by $\frac{1}{m}\epsilon^\ast \in H^2(BS^1;\QQ)$, where we denote by $\epsilon^\ast$ the standard basis of $H^2(BS^1)$. Hence, we have $\alpha(e)=\frac{1}{m} \epsilon^\ast$ for the edge $e$ emanating from the vertex corresponding to $[1:0]$. A similar computation for the other fixed point $[0:1]$ induces an orbifold GKM-graph for the $S^1$-action on a spindle $S^2(m,n)$ as described by Figure~\ref{fig_GKM-graph_eff_spindle}.
\item An $S^1$-action on a spindle $S^2(m,n)$ could be given by a diagonal action
$$t\cdot [z_1: z_2]\,=\,[tz_1:  tz_2],$$
where now the action may not be effective. As in the previous example, $\Gamma$ is given by the two vertices connected by an edge and $\theta$ is uniquely determined. We now calculate the weight of the orbifold tangential representation around the fixed point $[1:0]$: 
$$t\cdot [1: z_2]\,=\,[t:  tz_2]\,=\,[1: t^{\frac{m-n}{m}}z_2] $$
and in a similar fashion for the fixed point $[0:1]$. Figure \ref{fig_GKM-graph_spindle} now gives the orbifold GKM-graph for this action.
\end{enumerate}
\end{example}
\begin{figure}
\begin{tikzpicture}
\draw (0,0)--(4,0);
\draw[fill] (0,0) circle (0.05); 
\draw[fill] (4,0) circle (0.05); 
\draw [thick, -> ] (0,0)--(1,0);
\draw [thick, -> ] (4,0)--(3,0);
\node[above] at (1,0.1) {$\frac{1}{m}\epsilon^\ast$};
\node[above] at (3,0.1) {$-\frac{1}{n}\epsilon^\ast$};
\end{tikzpicture}
\caption{The GKM-graph for an effective spindle.}
\label{fig_GKM-graph_eff_spindle}
\end{figure}
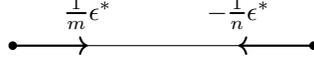
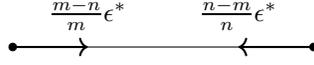
\begin{figure}
\begin{tikzpicture}
\draw (0,0)--(4,0);
\draw[fill] (0,0) circle (0.05); 
\draw[fill] (4,0) circle (0.05); 
\draw [thick, -> ] (0,0)--(1,0);
\draw [thick, -> ] (4,0)--(3,0);
\node[above] at (1,0.1) {$\frac{m-n}{m}\epsilon^\ast$};
\node[above] at (3,0.1) {$\frac{n-m}{n}\epsilon^\ast$};
\end{tikzpicture}
\caption{The GKM-graph for a spindle with diagonal action.}
\label{fig_GKM-graph_spindle}
\end{figure}

\begin{remark}
For more on the spindles $S^2(m,n)$, including describing effective actions when $(m,n)$ are not a pair of coprime integers, see~\cite{DKS-proc}.
\end{remark}

The next example exhibits a $2n$-dimensional GKM-orbifold 
equipped with a $k$-dimensional torus action where $k< n$. 

\begin{example}[Weighted projective space]\label{ex_wCP_111222}
Consider the action of $\CC^\ast$ on $\CC^4$ given by 
$$t\cdot (z_1, z_2, z_3, z_4)=(z_1, tz_2, tz_3, tz_4)$$
for some $t\in \CC^\ast$ and $(z_1, z_2, z_3, z_4)\in \CC^4$. 
It induces an action of $\CC^\ast$ on $\largewedge^2\CC^4 $
as follows
\begin{equation}\label{eq_C*-action_on_wedgeC^4}
t\cdot (z_{12}, z_{13}, z_{14}, z_{23}, z_{24}, z_{34})=
(tz_{12}, tz_{13}, tz_{14}, t^2z_{23}, t^2z_{24}, t^2z_{34}),
\end{equation}
where we write $z_{ij}\letbe z_{i}\wedge z_j$ for simplicity. 
The action defined in \eqref{eq_C*-action_on_wedgeC^4} 
gives us a weighted projective space $\mbb{P}_{(1,1,1,2,2,2)}(\largewedge^2 \CC^4)$. 
Notice that the standard $T^4$-action on $\CC^4$ induces a 
$T^4$-action on $\largewedge^2\CC^4$. Moreover, the circle subgroup
$\{(1, t, t, t)\mid t\in S^1\}$ of $T^4$ acts trivially on $\mbb{P}_{(1,1,1,2,2,2)}(\largewedge^2 \CC^4)$. 
Now, it is straightforward to 
see that $\mbb{P}_{(1,1,1,2,2,2)}(\largewedge^2 \CC^4)$ is a GKM-orbifold
with respect to the residual $T^4/S^1$-action. 
Here, we consider 
the following short exact sequence to identify $T^4/S^1$ with $T^3$:
\begin{equation}\label{eq_ses_for_wCP5}
\begin{tikzcd}
1\arrow{r} 	& S^1 \arrow{r}{\varpi}	&T^4\arrow{r}{\lambda}	& T^3\arrow{r}	&1,
\end{tikzcd}
\end{equation}
where $\varpi(t)=(1,t,t,t)$ and $\lambda$ is defined appropriately so that 
$\ker(\lambda)={\rm im}(\varpi)$, for instance 
$\lambda(t_1, t_2, t_3, t_4)=(t_1, t_2t_4^{-1}, t_3t_4^{-1})$. 
Choose a right splitting
$$\varrho \colon T^3 \lra T^4$$
of \eqref{eq_ses_for_wCP5} defined by $\varrho(r_1, r_2, r_3)=(r_1, r_2, r_3, 1)$, 
which  leads us to identity $T^3$ with ${\rm coker}(\varpi)$. 
Now, we compute the orbifold tangential representation around a fixed point 
$[0:0:0:0:0:1]$ as follows:
\begin{align*}
(r_1, r_2, r_3)&\,\cdot\,  [z_{12}:z_{13}: z_{14}: z_{23}: z_{24}: 1] \\
&=\, [r_1r_2z_{12}: r_1r_3z_{13}: r_1z_{14}: r_2r_3z_{23}: r_2z_{24}: r_3]\\
&=\, [r_1r_2r_3^{-\frac{1}{2}}z_{12}:r_1r_3^{\frac{1}{2}}z_{13}: r_1r_3^{-\frac{1}{2}}z_{14}: r_2z_{23}: r_2r_3^{-1}z_{24}: 1]
\end{align*}
It  shows us that 
$$T_{[0,0,0,0,0,1]}U \,\cong\, \bigoplus_{i=1}^5 V_{\alpha_{i}}/G_{i},$$
where each $1$-dimensional orbifold vector bundle representation $V_{\alpha_{i}}/G_{i}$
is determined by 
$$\left( \epsilon_1^\ast +\epsilon_2^\ast-\frac{1}{2}\ep_3^\ast, 
\ep_1^\ast+\frac{1}{2}\ep_3^\ast, 
\epsilon_1^\ast -\frac{1}{2}\epsilon_3^\ast,  
\ep_2^\ast, 
\epsilon_2^\ast -\epsilon_3^\ast \right)  \,\in\, 
\bigoplus_{i=1}^5 H^2(BT^3;\QQ) ,$$ 
respectively. A similar computation 
for the other fixed points yields the corresponding orbifold GKM-graph which is a complete
graph with six vertices. Figure \ref{fig_GKM_graph_wCP_wedge} shows a part of 
this orbifold GKM-graph around the vertex corresponding to $[0,0,0,0,0,1]$. 
\begin{figure}
\begin{tikzpicture}[yscale=0.8]
\draw[fill] (0,0) circle (0.05); 
\node[below] at (0,0) {\scriptsize$[0:0:0:0:0:1]$};

\foreach \z in {30,150}{
\draw[->] (0:0)--+(\z:3);
}
\foreach \z in {60,120}{
\draw[->] (0:0)--+(\z:2);
}
\foreach \z in {90}{
\draw[->] (0:0)--+(\z:1.8);
}

\node[above] at (155:3.5) {\scriptsize$\epsilon_1^\ast +\epsilon_2^\ast-\frac{1}{2}\ep_3^\ast$};
\node[above] at (130:2.3) {\scriptsize$-\ep_1^\ast+\frac{1}{2}\ep_3^\ast$}; 
\node[above] at (90:1.8) {\scriptsize$\epsilon_1^\ast -\frac{1}{2}\epsilon_3^\ast$};
\node[above] at (60:2) {\scriptsize$\ep_2^\ast$};
\node[above] at (30:3) {\scriptsize$\epsilon_2^\ast -\epsilon_3^\ast$};
\end{tikzpicture}
\caption{Orbifold GKM-graph around $[0:0:0:0:0:1]\in \mbb{P}_{(1,1,1,2,2,2)}(\largewedge^2 \CC^4)$.}
\label{fig_GKM_graph_wCP_wedge}
\end{figure}
\end{example}


Next we introduce an algebraic object obtained from a given orbifold GKM-graph. 
The definition is similar to the one defined in \cite[Section 1.7]{GZ}, but we emphasise that 
we use $\ZZ$ as the coefficient ring. 

\begin{definition}\label{def_equiv_cohom_of_graph}
Given an orbifold GKM-graph $(\Gamma, \alpha,\theta)$, 
we define the \emph{equivariant cohomology $H_{T^k}^\ast(\Gamma, \alpha)$ of the graph
 $(\Gamma, \alpha,\theta)$} as follows:
\begin{equation*}\label{eq_GKM-graph_cohom}
H_{T^k}^\ast(\Gamma, \alpha)\,=\,\left\{f\colon \mathcal{V}(\Gamma) \longrightarrow
H^\ast(BT^k) \mid 
f(i(e))\equiv f(t(e)) \text{ mod } \tilde{r}_e \alpha(e) \right\}, 
\end{equation*}
where $\tilde{r}_e$ is the smallest positive integer satisfying the 
condition (2)-(b) of Definition \ref{def_orb_GKM_graph}. 
\end{definition}
We notice that $H_{T^k}^\ast(\Gamma, \alpha)$ has a natural graded ring structure 
given by vertex-wise addition and multiplication, and the $i$-th degree is defined by  
\[
H_{T^k}^i(\Gamma, \alpha)=\left\{f\colon \mathcal{V}(\Gamma) \longrightarrow
H^i(BT^k) \mid 
f(i(e))\equiv f(t(e)) \text{ mod } \tilde{r}_e \alpha(e) \right\}.
\]
It is easy to check that $H_{T^k}^\ast(\Gamma, \alpha)\colonequals \bigoplus_{i\geq 0} H_{T^k}^i(\Gamma, \alpha)$. 
Moreover, 
it is equipped with a natural $H^\ast(BT^k)$-algebra structure.

\begin{remark}
If $(\Gamma, \alpha, \theta)$ is defined from a smooth toric manifold \cite{DJ}, then the equivariant cohomology $H_{T^k}^\ast(\Gamma, \alpha)$ is isomorphic to the face ring of the orbit space. We will discuss this further in Section \ref{sec_torus_orb_graph} and Section \ref{sec_torus_orb} in a wider category of spaces including all smooth toric manifolds. 
\end{remark}

The equivariant cohomology of $(\Gamma, \alpha, \theta)$ can be defined 
over rational or real coefficients, which coincides with the definition of the 
cohomology ring of a graph in  \cite[Section 1.7]{GZ}:
\begin{equation*}\label{eq_rational_graph_equiv_cohom}
H_{T^k}^\ast(\Gamma, \alpha;\QQ)\,=\,
\left\{f\colon \mathcal{V}(\Gamma) \longrightarrow 
H^\ast(BT^k;\QQ)
 \mid f(i(e))\equiv f(t(e)) \text{ mod } \alpha(e) \right\}.
\end{equation*}
It has a natural 
$H^\ast(BT^k;\QQ)$-algebra structure. We notice that 
$H_{T^k}^\ast(\Gamma,\alpha)$ is a subring of 
$H_{T^k}^\ast(\Gamma, \alpha;\QQ)$.

\begin{theorem}\label{thm_geometric_side}
Let $\mathcal{X}=(X, \mathcal{U})$ be a GKM-orbifold with 
respect to an effective action of torus $T$, whose underlying topological 
space $X$ is homotopic to a $T$-CW complex. 
Assume that all isotropy subgroups of $T$ are connected and $H^{odd}(X)=0$.  
Then the equivariant cohomology ring $H^\ast_{T}(X)$ 
is isomorphic to $H_{T}^\ast(\Gamma, \alpha)$ as an
$H^\ast(BT)$-algebra.
\end{theorem}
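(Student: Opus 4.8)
The plan is to prove the theorem by the standard two-step strategy of integral GKM-theory: first establish that $\mc{X}$ is \emph{equivariantly formal}, so that $H^*_T(X)$ injects into the equivariant cohomology of the fixed locus, and then identify the image of this injection with the subring $H^*_T(\Gamma,\alpha)$ by a Chang--Skjelbred-type argument carried out integrally. Since $H^*_T(\Gamma,\alpha)$ is defined purely through the congruences $f(i(e)) \equiv f(t(e)) \bmod \tilde r_e \alpha(e)$, the connection $\theta$ plays no role in the statement and may be ignored throughout.

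First I would record equivariant formality. Because $X$ is homotopic to a $T$-CW complex and $H^{odd}(X)=0$, the Serre spectral sequence of the Borel fibration $X \hookrightarrow ET\times_T X \to BT$ has $E_2^{p,q} = H^p(BT) \otimes H^q(X)$ concentrated in even total degree, so all differentials vanish for degree reasons and the sequence collapses. Hence $H^*_T(X)$ is a free $H^*(BT)$-module. Combined with the localization theorem for torus actions --- whose integral form is available here because the hypothesis that all isotropy subgroups are connected guarantees that the relevant fixed loci carry no spurious finite isotropy --- this freeness forces the restriction map $H^*_T(X) \to H^*_T(X^T) = \bigoplus_{p\in \mc{V}(\Gamma)} H^*(BT)$ to be injective, since the torsion submodule of a torsion-free module is zero.

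Next I would cut out the image. The key is the integral Chang--Skjelbred lemma: for an equivariantly formal action with connected isotropy groups, the image of $H^*_T(X)$ in $H^*_T(X^T)$ equals
\[
\bigcap_{H} \mathrm{im}\!\left(H^*_T(X^H) \to H^*_T(X^T)\right),
\]
the intersection running over the codimension-one subtori $H\subset T$ for which $X^H \neq X^T$. Under the GKM hypotheses each such $X^H$ is a disjoint union of the spindles $S_e$ together with their poles, so the intersection reduces to a condition on each edge $e$ separately. It therefore remains to compute, for a single spindle $S_e$ with poles $p=i(e)$ and $q=t(e)$, the image of the restriction $H^*_T(S_e) \to H^*_T(\{p,q\}) = H^*(BT)\oplus H^*(BT)$; the long exact sequence of the pair $(S_e,\{p,q\})$ reduces this to understanding $H^*_T(S_e)$ itself. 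Modelling $S_e$ on the orbifold $S^2(m,n)$ of Example~\ref{ex_spindle} with the residual $T$-action whose tangential weights at the poles are $\alpha(e)$ and $\alpha(\overline e)$, I would compute directly that a pair $(g_p,g_q)$ extends over $S_e$ precisely when $g_p \equiv g_q \bmod \tilde r_e\alpha(e)$, where $\tilde r_e$ is the minimal positive integer of condition (2)-(b) of Definition~\ref{def_orb_GKM_graph}. Assembling these edge conditions yields exactly the defining congruences of $H^*_T(\Gamma,\alpha)$, and since all maps are $H^*(BT)$-algebra homomorphisms, the identification is one of $H^*(BT)$-algebras.

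The main obstacle is twofold and lies in making the above work \emph{integrally} for \emph{orbifolds}. On the one hand, the Chang--Skjelbred lemma is classical over $\QQ$ or $\RR$ but its integral version is delicate; I expect the hypotheses $H^{odd}(X)=0$ and connected isotropy to be exactly what is needed to guarantee the exactness of the relevant Atiyah--Bredon sequence over $\ZZ$, by ensuring that each $X^H$ is itself equivariantly formal with torsion-free equivariant cohomology. On the other hand, the heart of the matter is the local spindle computation: whereas a smooth $2$-sphere contributes the congruence modulo $\alpha(e)$, the orbifold singularities of $S^2(m,n)$ enlarge the modulus to $\tilde r_e \alpha(e)$, and pinning this integer down correctly --- tracking how the orders of the local groups at the two poles interact with the axial labels --- is where the orbifold phenomenon, and hence the essential difficulty, resides.
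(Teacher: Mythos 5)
Your proposal follows essentially the same route as the paper's proof: collapse of the Serre spectral sequence from $H^{odd}(X)=0$, the integral Chang--Skjelbred sequence of Franz--Puppe (available precisely because all isotropy subgroups are connected), and an edge-by-edge identification of the image in $H^*_T(X^T)$ with the congruences modulo $\tilde{r}_e\alpha(e)$. The one step you defer --- the spindle computation --- is handled in the paper not by an orbifold chart analysis of $S^2(m,n)$ but by a Mayer--Vietoris comparison on the underlying $2$-sphere $X_e$, using that $X_e\setminus\{p,q\}$ is a single orbit with isotropy $T_e=\ker(\tilde{r}_e\alpha(e))$, so that $H^*_T(X_e\setminus\{p,q\})\cong H^*(BT)/\langle\tilde{r}_e\alpha(e)\rangle$ and the difference map is reduction modulo $\tilde{r}_e\alpha(e)$; the modulus thus comes from the generic isotropy of the one-dimensional orbit rather than from the local groups at the poles.
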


\begin{proof}
The proof is similar to  \cite[Proposition 2.2]{BFR} and 
\cite[Theorem 1.3]{Fr}. The assumption $H^{odd}(X;\ZZ)=0$ implies that 
the Serre spectral sequence of the fibration 
$$\begin{tikzcd}
X \arrow{r} & ET\times_T X \arrow{r}& BT\end{tikzcd}$$
degenerates at the $E_2$-page. This is equivalent to the exactness of the Chang--Skjelbred sequence 
\begin{equation}\label{eq_CS_seq}
\begin{tikzcd}
0 \arrow{r}& 
H^\ast_{T^k}(X) \arrow{r}{\iota^\ast}&
H^\ast _{T^k}(X_0)\arrow{r}{\delta} &
H^{\ast+1}_{T^k}(X_1, X_0)\arrow{r}& \cdots,
\end{tikzcd}
\end{equation}
over $\ZZ$-coefficients by the connectedness of the isotropy 
subgroups of $T$. We refer to \cite[Theorem 1.1]{FP}. 
Here, $X_0$ and $X_1$ denote the set of fixed points and the union of all 
$0$- and $1$-dimensional orbits in $X$, respectively. The map $\iota \colon X_0 \hookrightarrow X$
is the inclusion of fixed points into $X$. 
Since $X_0$ is finite, we have 
$$H^\ast_{T^k}(X_0)\,\cong\, 
\bigoplus_{p\in X_0} H^\ast(BT^k).
$$

Let $X_e$ be the connected component of the fixed point set
$X^{T_e}$ corresponding to the edge $e$, where we identify $H^2(BT)$ with $\Hom(T, S^1)$, and 
$T_e\letbe\ker (\tilde{r}_e\alpha(e))$, where $\tilde{r}_e$ is defined as in Definition \ref{def_equiv_cohom_of_graph}. 
Then $X_e$ can be regarded as a set of orbits of $T/T_e\cong S^1$, 
hence it is homeomorphic to a 2-sphere $S^2$. 
Since every circle action on $S^2$
is isomorphic to the standard one, $X_e$ has two fixed points
$\{p,q\}$ which correspond to the vertices of $e$. 
Observe the following commutative diagram:
\begin{equation}\label{eq_proof_of_GKM}
{\begin{tikzcd}[column sep=small]
0 \arrow{r} & 
H^\ast_{T^k}(X_e) \arrow{r}{\iota^\ast_e}& 
H^\ast_{T^k}(\{p, q\}) \arrow{r}{\delta_e} & 
H^{\ast+1}_{T^k}(X_e, \{p, q\}) \arrow{r}& 
\cdots \\
0\arrow{r}&
H^\ast_{T^k}(X_e) \arrow{r}  \arrow[equal]{u} & 
H^\ast_{T^k}(X_e\setminus\{q\}) \oplus H^\ast_{T^k}(X_e\setminus \{p\})\arrow{r}{j_e}   \arrow{u}{\cong}&
H^{\ast}_{T^k}(X_e\setminus \{p,q\})\arrow{r} &
\cdots\\
&
&
H^\ast(BT^k) \oplus H^\ast(BT^k)\arrow{u}{\cong} \arrow{r}{\tilde{j}_e}&
H^\ast(BT_e)\arrow{u}{\cong}&
\end{tikzcd}}
\end{equation}
where the first row is the long exact sequence of the pair $(X_e, \{p, q\})$ and 
the second row is the Mayer--Vietoris sequence, and
the third row is induced from the inclusion $T_e \to T^k$. 
Moreover, $\tilde{r}_e\alpha_e$, as an element of $\Hom(T^k, S^1)$, induces 
a short exact sequence 
$$\begin{tikzcd}
1 \arrow{r} &T_e \arrow{r}&  T^k \arrow{r}{\tilde{r}_e\alpha_e} &S^1 \arrow{r} &1,
\end{tikzcd}
$$
which allows us to identify  $H^\ast(BT_e)$ with $H^\ast(BT^k)/ \left< \tilde{r}_e\alpha(e)\right>$. 

Note that $X_1$ is the union of 2-spheres intersecting only at fixed points.  
Hence, a successive application of the relative Mayer--Vietoris sequence yields
an isomorphism 
$$H_{T^k}^\ast(X_1, X_0) \,\cong\, \bigoplus_{e\in {\mathcal{E}}}H^\ast_{T^k}(X_e, \{p, q\}).$$
Moreover, diagram \eqref{eq_proof_of_GKM} shows that $\ker \delta_e\cong \ker j_e$. 
Hence, by fixing an orientation on $\mc{E}(\Gamma)$, the kernel of the differential $\delta$ in 
\eqref{eq_CS_seq} can be realised as
\begin{align}
\ker \delta \,&\cong\, \bigcap_{e\in \mathcal{E}(\Gamma)} \ker \delta_e \,\cong\, \bigcap_{e\in \mathcal{E}(\Gamma)}\ker j_e \nonumber\\
&=\, \{f \colon \mathcal{V}(\Gamma) \longrightarrow H^\ast(BT^k) \mid 
f(i(e))-f(t(e)) \equiv 0 ~{\rm mod}~(\tilde{r}_e \alpha(e)) \},  \label{eq_ker_delta}
\end{align}
where the equality in \eqref{eq_ker_delta} holds because the map $\tilde{j}_e$ in \eqref{eq_proof_of_GKM} sends $(f(i(e)),f(t(e)))$ to $f(i(e))-f(t(e))$. Notice that the exactness of \eqref{eq_CS_seq} implies that \eqref{eq_ker_delta} is isomorphic to image of monomorphism 
$\iota^\ast \colon H^\ast_{T^k}(X) \to H_{T^k}^\ast (X_0)$. Hence, the result follows. 
\end{proof}

\section{Orbifold torus graphs}\label{sec_torus_orb_graph}
In this section, we focus on the case when $k=n$ from the definition 
of an orbifold GKM-graph and from now on write $T\letbe T^n$. All discussions in this section 
generalise some of the ideas in \cite[Section 3]{MMP} to the orbifold category. 
We begin by defining a particular class of orbifold GKM-graphs as follows: 
\begin{definition}\label{def_orb_torus_graph}
An \emph{abstract orbifold torus graph} is a pair $(\Gamma, \alpha)$ of  
an $n$-valent graph $\Gamma$ and a function  
$$\alpha \colon \mathcal{E}(\Gamma) \longrightarrow H^2(BT;\QQ)$$ 
called an \emph{axial function} such that 
\begin{enumerate}
\item the set of vectors $\alpha(\mathcal{E}_p(\Gamma))$ is linearly 
independent  for every $p\in \mathcal{V}(\Gamma)$;
\item for each edge $e\in \mathcal{E}(\Gamma)$, there are positive integers 
$r_{e}$ and $r_{\overline{e}}$ such that 
$r_{e}\alpha(e)=\pm r_{\overline{e}}\alpha(\overline{e})
\in H^2(BT)$, where $\overline{e}$ denotes the edge $e$ with
the reversed orientation. 
\end{enumerate}
\end{definition}

\begin{remark}\label{rmk_torus_graph}
Though Definition \ref{def_orb_GKM_graph} assumes the pairwise linear independency of $\alpha$, here we further assume the linearly independency of $\alpha$ as in Definition~\eqref{def_orb_torus_graph}-(2). This makes the abstract orbifold torus graph of Definition \ref{def_orb_torus_graph} generalise the notion of a \emph{torus graph} in \cite[Section 3]{MMP} to the orbifold setup. 
\end{remark}  

We notice that the first condition of Definition \ref{def_orb_torus_graph}
determines a connection $\theta$ on $(\Gamma, \alpha)$ uniquely. 
This allows us to define a \emph{face} of an abstract orbifold torus graph as follows: 
let $\Gamma'$ be a $d$-valent subgraph of 
$\Gamma$, where $d< n$. Then
\begin{equation}\label{eq_face_subgraph}
F\,\letbe\, (\Gamma', \alpha|_{\mc{E}(\Gamma')})
\end{equation}
is called a 
$d$-\emph{dimensional face} of $(\Gamma,\alpha, \theta)$ if it is 
invariant under the uniquely determined connection $\theta$. 
Figure \ref{fig_orb_tor_graph_CP1236} is an example of an orbifold torus graph. 
Considering the graph $\Gamma$ as a 1-skeleton of $3$-simplex $\Delta^3$, 
one can see that faces of $(\Gamma, \alpha)$ are given by intersecting 
$(\Gamma, \alpha)$ with faces of $\Delta^3$. 

\begin{figure}
\begin{tikzpicture}[scale=0.8]
\draw (0,0)--(3,5)--(-3,5)--cycle;
\draw (0,10/3)--(3,5);
\draw (0,10/3)--(-3,5);
\draw (0,10/3)--(0,0);
               
\draw [fill] (0,0) circle [radius=0.05];
\draw [fill] (3,5) circle [radius=0.05];	
\draw [fill] (-3,5) circle [radius=0.05];
\draw [fill] (0,10/3) circle [radius=0.05];


\draw[very thick, ->] (0,0)--(0,1);
\draw[very thick, ->] (0,0)--(3/5,1);
\draw[very thick, ->] (0,0)--(-3/5,1);
\node[above] at (0,1) {\scriptsize$-\frac{1}{2}\ep_1^\ast$};
\node[right] at (3/5,0.8) {\scriptsize $-\frac{3}{2}\ep_1^\ast+\ep_2^\ast$};
\node[left] at (-3/5,0.8) {\scriptsize $-3\epsilon_1^\ast+\ep_3^\ast$};

\draw[very thick, ->] (0,10/3)--(0,10/3 -1);
\draw[very thick, ->] (0,10/3)--(1,35/9);
\draw[very thick, ->] (0,10/3)--(-1,35/9);
\node[right] at (0,7/3) {\scriptsize $\ep_1^\ast$};
\node[below] at (1,34/9) 
{\scriptsize $\ep_2^\ast$};
\node[below] at (-1, 34/9) {\scriptsize $\ep_3^\ast$};

\draw[very thick, ->] (3,5)--(2,5);
\draw[very thick, ->] (3,5)--(12/5,4);
\draw[very thick, ->] (3,5)--(2,40/9);
\node[above] at (2,5) {\scriptsize $-2\ep_2^\ast+\ep_3^\ast$};
\node[right] at (13/5,4) {\scriptsize $\ep_1^\ast-\frac{2}{3}\ep_2^\ast$};
\node[left] at (2.1,42/9) {\scriptsize $-\frac{1}{3}\ep_2^\ast$};

\draw[very thick, ->] (-3,5)--(-2,5);
\draw[very thick, ->] (-3,5)--(-2,40/9);
\draw[very thick, ->] (-3,5)--(-12/5,4);
\node[above] at (-2,5) {\scriptsize$\ep_2^\ast-\frac{1}{2}\ep_3^\ast$};
\node[right] at (-2.1,42/9) {\scriptsize$-\frac{1}{6}\ep_3^\ast$};
\node[left] at (-13/5,4) {\scriptsize$\ep_1^\ast-\frac{1}{3}\ep_3^\ast$};
\end{tikzpicture}
\caption{An orbifold torus graph of $(\Gamma, \alpha)$.}
\label{fig_orb_tor_graph_CP1236}
\end{figure}
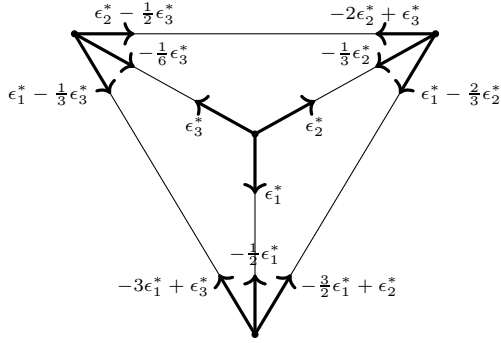

From an orbifold torus graph $(\Gamma, \alpha)$, we shall define an
algebraic object $\ZZ[\Gamma, \alpha]$ which we call a 
\emph{a weighted face ring}. 
Let $\mathcal{F}^{(d)}$ be the set of all $d$-dimensional faces of 
$(\Gamma,\alpha)$ for $0\leq d \leq n$, and 
$\mathcal{F}\letbe \mathcal{F}^{(0)} \cup \dots \cup \mathcal{F}^{(n)}$
the set of all faces of $(\Gamma, \alpha)$.
Notice that 
$\mc{F}^{(n)}=\{\Gamma \}$ , $\mathcal{F}^{(0)}=\mc{V}(\Gamma)$ 
and $\mathcal{F}^{(1)}$ is the set of edges of $\Gamma$ ignoring 
the orientation.

Associating each face $F$ with a formal generator $x_F$, with $\deg x_F=2(n-\dim F)$, 
we obtain a polynomial ring $\mathbf{k}[x_F \mid F\in \mathcal{F}]$ for 
any commutative ring $\mathbf{k}$ with unit. 
We define $x_{\Gamma}=1$ and $x_\emptyset=0$ by convention. In this paper, we mainly 
focus on the case when $\mathbf{k}$ is $\QQ$ or $\ZZ$. 
Now, we consider a ring homomorphism  
\begin{equation}\label{eq_mu:poly->H*BT}
\mu \colon \QQ[x_F \mid F\in \mathcal{F}] \longrightarrow H_T^\ast(\Gamma, \alpha;\QQ)
\end{equation}
sending $x_F$ to the element $\tau_F$ defined by 
\begin{equation}\label{eq_rational_thom_class}
\tau_F(v)\,\letbe\, \begin{cases}
\prod_{\substack{i(e)=v\\e\notin \Gamma'}  }\alpha(e) & \text{if } 
v\in \mc{V}(\Gamma');\\
0 & \text{otherwise}, \end{cases}
\end{equation}
see \eqref{eq_face_subgraph} for the relation between $F$ and $\Gamma'$. 
We call $\tau_F$ the \emph{rational Thom class} corresponding to $F\in \mathcal{F}$. 
See Figure \ref{fig_rat_Thom_classes} for rational Thom class of degree $2$ 
for the graph $(\Gamma, \alpha)$ 
described in Figure \ref{fig_orb_tor_graph_CP1236}. 

\begin{figure}
\begin{tikzpicture}[scale=0.35]
\begin{scope}
\coordinate (v1) at (0,10/3);
\coordinate (v2) at (0,0);
\coordinate (v3) at (3,5);
\coordinate (v4) at (-3,5);

\draw[thick] (0,0)--(3,5)--(-3,5)--cycle;
\draw[dotted] (0,10/3)--(3,5);
\draw[dotted] (0,10/3)--(-3,5);
\draw[dotted] (0,10/3)--(0,0);
               
\draw [fill] (0,0) circle [radius=0.15];
\draw [fill] (3,5) circle [radius=0.15];	
\draw [fill] (-3,5) circle [radius=0.15];

\node[below] at (v2) {\scriptsize$-\frac{1}{2}\epsilon_1^\ast$};
\node[above] at (2.5,5) {\scriptsize $-\frac{1}{3}\ep_2^\ast$};
\node[above] at (-2.5,5) {\scriptsize $-\frac{1}{6}\ep_3^\ast$};

\node at (0,9/3) {\footnotesize$\tau_{F_1}$};         

\end{scope}

\begin{scope}[xshift=250]
\coordinate (v1) at (0,10/3);
\coordinate (v2) at (0,0);
\coordinate (v3) at (3,5);
\coordinate (v4) at (-3,5);

\draw[dotted] (0,0)--(3,5)--(-3,5)--cycle;
\draw[dotted] (0,10/3)--(3,5);
\draw[dotted] (0,10/3)--(-3,5);
\draw[dotted] (0,10/3)--(0,0);

\draw[thick]  (v1)--(v3)--(v4)--cycle;
\draw [fill] (v1) circle [radius=0.15];
\draw [fill] (v3) circle [radius=0.15];
\draw [fill] (v4) circle [radius=0.15];

\node[below] at (v1) {\scriptsize $\ep_1^\ast$};
\node[above] at (2.5,5) {\scriptsize $\ep_1^\ast-\frac{2}{3}\ep_2^\ast$};
\node[above] at (-2.5,5) {\scriptsize $\ep_1^\ast - \frac{1}{3}\epsilon_3^\ast$};

\node at (0,4.3) {\footnotesize$\tau_{F_2}$};

\end{scope}

\begin{scope}[xshift=500]
\coordinate (v1) at (0,10/3);
\coordinate (v2) at (0,0);
\coordinate (v3) at (3,5);
\coordinate (v4) at (-3,5);

\draw[dotted] (0,0)--(3,5)--(-3,5)--cycle;
\draw[dotted] (0,10/3)--(3,5);
\draw[dotted] (0,10/3)--(-3,5);
\draw[dotted] (0,10/3)--(0,0);

\draw[thick] (v1)--(v2)--(v4)--cycle;
\draw [fill] (v1) circle [radius=0.15];
\draw [fill] (v2) circle [radius=0.15];	
\draw [fill] (v4) circle [radius=0.15];

\node[right] at (v1) {\scriptsize $\ep_2^\ast$};
\node[below] at (v2) {\scriptsize $-\frac{3}{2}\ep_1^\ast+\ep_2^\ast$};
\node[above right] at (v4) {\scriptsize $\ep_2^\ast-\frac{1}{2}\ep_3^\ast$};

\node at (-1,3) {\footnotesize$\tau_{F_3}$};

\end{scope}

\begin{scope}[xshift=750]
\coordinate (v1) at (0,10/3);
\coordinate (v2) at (0,0);
\coordinate (v3) at (3,5);
\coordinate (v4) at (-3,5);

\draw[dotted] (0,0)--(3,5)--(-3,5)--cycle;
\draw[dotted] (0,10/3)--(3,5);
\draw[dotted] (0,10/3)--(-3,5);
\draw[dotted] (0,10/3)--(0,0);

\draw[thick]  (v1)--(v2)--(v3)--cycle;
\draw [fill] (v1) circle [radius=0.15];
\draw [fill] (v2) circle [radius=0.15];	
\draw [fill] (v3) circle [radius=0.15];

\node[left] at (v1) {\scriptsize $\ep_3^\ast$};
\node[below] at (v2) {\scriptsize $-3\ep_1^\ast+\ep_3^\ast$};
\node[above left] at (v3) {\scriptsize $-2\ep_2^\ast+\ep_3^\ast$};

\node at (1,3) {\footnotesize$\tau_{F_4}$};

\end{scope}

\end{tikzpicture}
\caption{Rational Thom classes of degree $2$.}
\label{fig_rat_Thom_classes}
\end{figure}
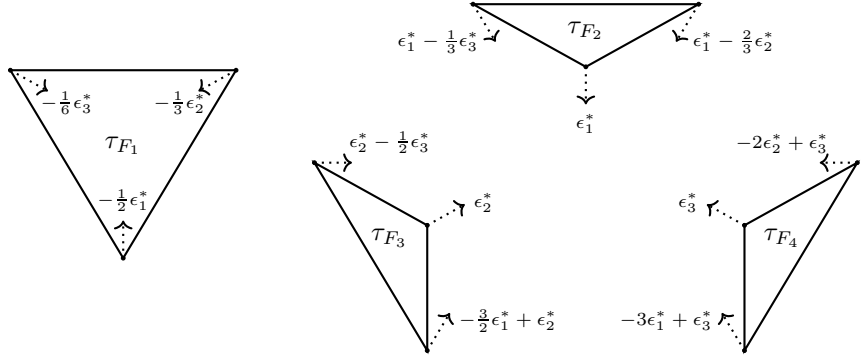

Restricting $\mu$ of (3.1) to the ring $\ZZ[x_F \mid F\in \mathcal{F}]$ of polynomials with
integral coefficients, we have the following subset  
$\ms{Z}_{\Gamma, \alpha}$ of $\ZZ[x_F \mid F\in \mc{F}]$:
\begin{equation}\label{eq_subcollection_integrality_condition}
\mathscr{Z}_{\Gamma, \alpha} \,\letbe\,
\left\{ f\in \ZZ[x_F \mid F\in \mc{F}] ~\big|~ \forall v\in \mc{V}(\Gamma),~
\mu(f)(v) \in  H^\ast(BT) \right\}.
\end{equation}
Indeed, the collection 
\eqref{eq_subcollection_integrality_condition} is 
closed under addition and multiplication induced from 
$\ZZ[x_F \mid F\in \mc{F}]$, hence it is a subring. 
\begin{remark}\label{rmk_w_face_smooth_case}
When $(\Gamma, \alpha)$ is a torus graph as defined in \cite{MMP}, 
the image of the axial function $\alpha$ sits in $H^2(BT)$. Hence, in this case, 
$\ms{Z}_{\Gamma, \alpha}=\ZZ[x_F\mid F\in \mc{F}]$. 
\end{remark}

We notice that the combinatorics of 
$\Gamma$ gives the relation
\begin{equation}\label{eq_SR-ideal_for_Thom_classes}
\tau_F\tau_E\,=\,\tau_{E\vee F}\sum_{G\in E\cap F}\tau_G,
\end{equation}
where $E \vee F$ denotes the minimal face containing both $E$ and $F$ and $G\in E\cap F$ runs through all connected components.
We refer to the proof of \cite[Lemma 6.3]{MP}. 
Therefore, it is straightforward to see from the definition of $\ms{Z}_{\Gamma, \alpha}$ 
that 
\begin{equation}\label{eq_generator_SR_ideal}
\Big\{x_Fx_E-x_{E\vee F}\sum_{G\in E\cap F}x_G \mid E, F, G \in \mathcal{F}  \Big\}
\end{equation}
is a subset of $\ms{Z}_{\Gamma, \alpha}$ and defines an ideal of $\ms{Z}_{\Gamma, \alpha}$
because the image of each element in \eqref{eq_SR-ideal_for_Thom_classes} 
via $\mu$ is identically zero. 
Now we have the following definition 
of a \emph{weighted face ring}. 

\begin{definition}\label{def_weighted_face_ring}
The \emph{weighted face ring} of an abstract torus orbifold graph $(\Gamma, \alpha)$ 
is the quotient ring 
$$\mathbb{Z}[\Gamma, \alpha]\,\letbe\,\ms{Z}_{\Gamma, \alpha}/ \mc{I},$$
where $\mathcal{I}$ is the ideal generated by elements of \eqref{eq_generator_SR_ideal}.
\end{definition}

In the proof of \cite[Theorem 5.5]{MMP}, they consider 
they considered a torus graph $(\Gamma, \alpha)$ and prove that 
the map $\mu$ in \eqref{eq_mu:poly->H*BT}, with $\ZZ$-coefficients, 
factors through the map 
\begin{equation}\label{eq_factor_through_isom}
\ZZ[x_F \mid F\in \mc{F}]/\mc{I} \longrightarrow H^\ast_T(\Gamma, \alpha)
\end{equation}
and it is indeed an isomorphism.  In this case, the domain of \eqref{eq_factor_through_isom} 
coincides with $\ZZ[\Gamma, \alpha]$ of Definition \ref{def_weighted_face_ring}, 
see Remark \ref{rmk_w_face_smooth_case}. 

If we work over rationals, the role of the integers $\tilde{r}_e$ in an abstract orbifold torus graph cancels out, which makes the theory the same as \cite[Theorem 5.5]{MMP}. Hence, we apply the same argument to get the following lemma: 
\begin{lemma}\label{lem_isom_face_ring_graph_cohom_over_Q}
Given a torus orbifold graph $(\Gamma, \alpha)$, 
there is a ring isomorphism between $\QQ[x_F \mid F\in \mc{F}]/\mc{I}$ 
and $H^\ast_T(\Gamma, \alpha; \QQ)$. 
\end{lemma}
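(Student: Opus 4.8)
The plan is to reprise the proof of \cite[Theorem 5.5]{MMP}, transporting it from $\ZZ$-coefficients to $\QQ$-coefficients. What makes this adaptation legitimate is that, rationally, an orbifold torus graph is indistinguishable from an ordinary torus graph in the sense of Remark \ref{rmk_torus_graph}: condition (1) of Definition \ref{def_orb_torus_graph} forces the $n$ vectors $\alpha(\mathcal{E}_p(\Gamma))$ to be linearly independent inside the $n$-dimensional space $H^2(BT;\QQ)$, hence to form a $\QQ$-basis at every vertex $p$. This basis property is precisely the hypothesis on which \cite{MMP} rely; their integral basis enters only through its spanning and linear-independence properties, never through integrality. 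Moreover, over $\QQ$ the distinction between congruence modulo $\alpha(e)$ and modulo $\tilde r_e\alpha(e)$ evaporates, since $\tilde r_e$ is a unit (compare \eqref{eq_rational_graph_equiv_cohom}). I would therefore check that each step of the \cite{MMP} argument depends only on this basis property and so carries over verbatim.

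First I would verify that $\mu$ factors through the quotient by $\mc{I}$. As $\mc{I}$ is generated by the elements \eqref{eq_generator_SR_ideal}, this reduces to the identity \eqref{eq_SR-ideal_for_Thom_classes} among rational Thom classes, $\tau_F\tau_E=\tau_{E\vee F}\sum_{G\in E\cap F}\tau_G$, which holds by the same vertex-wise computation as in \cite[Lemma 6.3]{MP}; hence $\mu$ descends to a ring homomorphism $\bar\mu\colon \QQ[x_F\mid F\in\mc{F}]/\mc{I}\to H_T^\ast(\Gamma,\alpha;\QQ)$. Next I would prove surjectivity of $\bar\mu$ by downward induction on the dimension of faces: at each vertex $v$ the value $f(v)$ is a polynomial in the basis $\alpha(\mathcal{E}_v(\Gamma))$, and one uses the Thom classes to peel off a leading contribution supported on a single face, thereby shrinking the support of $f$, exactly as in \cite{MMP}. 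It is the basis property that guarantees these local polynomials are realised by products of Thom classes.

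The main obstacle is injectivity, which I would settle by a Poincar\'e-series comparison. Both $\QQ[x_F\mid F\in\mc{F}]/\mc{I}$ and $H_T^\ast(\Gamma,\alpha;\QQ)$ are graded $H^\ast(BT;\QQ)$-modules that are finite-dimensional in each degree, and I would show they have the same graded Poincar\'e series; in fact both are free over $H^\ast(BT;\QQ)$ of total rank $|\mc{V}(\Gamma)|$ with generators in matching degrees, the module structure on the domain being induced by the degree-two generators (the codimension-one faces). Freeness of $H_T^\ast(\Gamma,\alpha;\QQ)$ and the computation of its Poincar\'e series follow from the basis property, which embeds the piecewise polynomials into $\bigoplus_{v\in\mc{V}(\Gamma)}H^\ast(BT;\QQ)$ as a free submodule of rank equal to the number of vertices. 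Freeness and the Poincar\'e series of the face ring are purely combinatorial consequences of the simplicial-poset structure of $\mc{F}$, and this computation does not see whether the coefficients are $\ZZ$ or $\QQ$; it is the one carried out in \cite{MMP}. Since a degree-preserving surjection between graded vector spaces of equal finite dimension in each degree is an isomorphism, the equality of Poincar\'e series upgrades surjectivity of $\bar\mu$ to an isomorphism. The only genuinely new bookkeeping relative to \cite{MMP} is to confirm that over $\QQ$ no integrality constraint intervenes, so that the domain is all of $\QQ[x_F]/\mc{I}$ rather than a proper subring $\ms{Z}_{\Gamma,\alpha}/\mc{I}$ as in the integral Definition \ref{def_weighted_face_ring}.
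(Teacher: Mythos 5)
Your proposal takes the same route as the paper, whose entire proof of this lemma is the observation that the argument of \cite[Theorem 5.5]{MMP} goes through unchanged over $\QQ$; your justification --- that rationally the values $\alpha(\mathcal{E}_p(\Gamma))$ form a basis of $H^2(BT;\QQ)$ at each vertex and each $\tilde{r}_e$ becomes a unit, so that an orbifold torus graph is simply a torus graph over $\QQ$ --- is precisely the point the paper leaves implicit. The one small imprecision is in your injectivity sketch: freeness of $H^\ast_T(\Gamma,\alpha;\QQ)$ over $H^\ast(BT;\QQ)$ does not follow merely from its being a submodule of $\bigoplus_{v}H^\ast(BT;\QQ)$ of the right rank, but this freeness (via flow-up classes) and the matching Hilbert series are part of the \cite{MMP} machinery you are importing, so nothing essential is missing.
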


The next theorem
extends the result of \cite{MMP} to the category of torus orbifolds. 
\begin{theorem}
\label{main-thm1}
There is a ring  isomorphism between 
$\mathbb{Z}[\Gamma, \alpha]$ and $H_{T}^{*}(\Gamma,\alpha)$. 
\end{theorem}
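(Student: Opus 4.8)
The plan is to build the isomorphism $\mathbb{Z}[\Gamma,\alpha] \to H^*_T(\Gamma,\alpha)$ by restricting the rational isomorphism of Lemma~\ref{lem_isom_face_ring_graph_cohom_over_Q} and carefully tracking integrality. Recall that $\mu$ of \eqref{eq_mu:poly->H*BT} factors through $\QQ[x_F\mid F\in\mc{F}]/\mc{I}$, and by Lemma~\ref{lem_isom_face_ring_graph_cohom_over_Q} the induced map $\bar\mu_\QQ\colon \QQ[x_F]/\mc{I}\to H^*_T(\Gamma,\alpha;\QQ)$ is an isomorphism. Since $\mc{I}$ is defined over $\ZZ$ and is contained in $\ms{Z}_{\Gamma,\alpha}$ (as noted right before Definition~\ref{def_weighted_face_ring}, its generators map to zero under $\mu$, hence lie in $\ms{Z}_{\Gamma,\alpha}$), the quotient $\mathbb{Z}[\Gamma,\alpha]=\ms{Z}_{\Gamma,\alpha}/\mc{I}$ sits inside $\QQ[x_F]/\mc{I}$. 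Thus I would define $\bar\mu$ to be the restriction of $\bar\mu_\QQ$ to $\mathbb{Z}[\Gamma,\alpha]$ and aim to show it is an isomorphism onto $H^*_T(\Gamma,\alpha)$, which itself sits inside $H^*_T(\Gamma,\alpha;\QQ)$ as a subring.

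First I would verify that $\bar\mu$ lands in $H^*_T(\Gamma,\alpha)$, not merely in the rational graph cohomology. This is essentially the definition of $\ms{Z}_{\Gamma,\alpha}$: an element $f\in\ms{Z}_{\Gamma,\alpha}$ is precisely one for which $\mu(f)|_v\in H^*(BT)$ for every vertex $v$. A function $\mathcal{V}(\Gamma)\to H^*(BT;\QQ)$ satisfying the rational congruence condition of \eqref{eq_rational_graph_equiv_cohom} and taking integral values at every vertex automatically satisfies the integral congruence condition of \eqref{eq_GKM-graph_cohom}; indeed one checks that $f(i(e))\equiv f(t(e))$ modulo $\alpha(e)$ over $\QQ$ together with both sides being integral forces congruence modulo $\tilde r_e\alpha(e)$ over $\ZZ$, since $\tilde r_e$ is exactly the smallest positive integer clearing the denominators so that $\tilde r_e\alpha(e)\in H^2(BT)$. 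Hence $\mu(f)\in H^*_T(\Gamma,\alpha)$, so $\bar\mu\colon\mathbb{Z}[\Gamma,\alpha]\to H^*_T(\Gamma,\alpha)$ is well defined.

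Injectivity is then immediate: $\bar\mu$ is the restriction of the rational isomorphism $\bar\mu_\QQ$ to a subring, so it is injective. The substance of the theorem is surjectivity, and this is the step I expect to be the main obstacle. Given $f\in H^*_T(\Gamma,\alpha)\subseteq H^*_T(\Gamma,\alpha;\QQ)$, surjectivity of $\bar\mu_\QQ$ produces a unique preimage $P\in\QQ[x_F]/\mc{I}$ with $\bar\mu_\QQ(P)=f$; I must show $P$ can be represented by an element of $\ms{Z}_{\Gamma,\alpha}$, i.e. by a polynomial with integer coefficients whose image is already integral at every vertex. The natural approach is a degree-filtration argument exploiting the face structure: I would write $f$ as a $\QQ$-linear combination of rational Thom classes $\tau_F$ and argue by downward induction on $\dim F$ (equivalently upward on degree). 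At the top, $f$ restricted to the minimal nonzero faces is controlled by the top-dimensional faces; using the relation \eqref{eq_SR-ideal_for_Thom_classes} and the integrality of $f$ at each vertex, I would peel off integral multiples of Thom classes, reducing to a function supported on smaller faces while keeping integrality. The key point is that the product formula $\tau_F\tau_E=\tau_{E\vee F}\sum_{G\in E\cap F}\tau_G$ lets any monomial in the $x_F$ be rewritten, modulo $\mc{I}$, as an integral combination of Thom classes, so that $\ms{Z}_{\Gamma,\alpha}/\mc{I}$ is spanned over $\ZZ$ by the images of the $x_F$; matching leading coefficients of $f$ against those of the integral Thom classes completes the induction.

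\begin{remark}
The crux is that although individual rational Thom classes $\tau_F$ need not be integral (their values involve fractional weights $\alpha(e)$), an integral element of $H^*_T(\Gamma,\alpha)$ always admits an integral polynomial expression after passing to the quotient by $\mc{I}$; this is exactly what the integrality condition defining $\ms{Z}_{\Gamma,\alpha}$ records, and it is what separates the orbifold case from the manifold case of \cite{MMP}, where by Remark~\ref{rmk_w_face_smooth_case} the ring $\ms{Z}_{\Gamma,\alpha}$ is the full polynomial ring.
\end{remark}
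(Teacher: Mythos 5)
Your architecture is the same as the paper's: both arguments sit inside the commutative square relating $\mu\colon \QQ[x_F\mid F\in\mc{F}]\to H^\ast_T(\Gamma,\alpha;\QQ)$ to its restriction on $\ms{Z}_{\Gamma,\alpha}$, both get injectivity from Lemma~\ref{lem_isom_face_ring_graph_cohom_over_Q}, and both must identify the kernel with $\mc{I}$. The divergence is in surjectivity, and that is where your proposal has a genuine gap. The paper dispatches surjectivity by observing that the integrality condition in \eqref{eq_subcollection_integrality_condition} is imposed on \emph{values}: any integer-coefficient preimage of $f\in H^\ast_T(\Gamma,\alpha)$ under $\mu$ automatically lies in $\ms{Z}_{\Gamma,\alpha}$, so the only issue is the existence of such a preimage. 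You correctly isolate this as the crux, but your induction does not deliver it. The pivotal claim --- that the relation \eqref{eq_SR-ideal_for_Thom_classes} lets every monomial be rewritten modulo $\mc{I}$ as an integral combination of the $\tau_F$, so that $\ms{Z}_{\Gamma,\alpha}/\mc{I}$ is $\ZZ$-spanned by the images of the $x_F$ --- is false. The relation is vacuous when $E=F$, there are no faces of negative dimension, and in degrees above $2n$ the graded pieces are spanned only by genuine monomials $\prod\tau_{F_i}^{a_i}$, not by single Thom classes. With that claim gone, the ``matching of leading coefficients'' has nothing to match against, and the peeling step would in any case require a divisibility statement (that at a bottom vertex $v$ of the support, the integral class $f(v)$ is an integer multiple of the rational product $\prod\alpha(e)$) which is precisely the content of the theorem and is never verified.

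Two secondary points. First, your well-definedness argument assumes $\tilde{r}_e$ is the smallest integer clearing the denominators of $\alpha(e)$; by Definition~\ref{def_orb_torus_graph}(2) it is the smallest positive integer for which $r_e\alpha(e)=\pm r_{\overline{e}}\alpha(\overline{e})$ lies in $H^2(BT)$ for \emph{some} positive integer $r_{\overline{e}}$, and if $\tilde{r}_e\alpha(e)$ fails to be primitive then vertex-wise integrality plus the rational congruence does not force the congruence modulo $\tilde{r}_e\alpha(e)$ (for instance $\alpha(e)=\tfrac{3}{2}\ep^\ast$, $\alpha(\overline{e})=-\tfrac{3}{4}\ep^\ast$ gives $\tilde{r}_e\alpha(e)=3\ep^\ast$, while an integral multiple of $\alpha(e)$ can be any integer multiple of $\ep^\ast$). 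Second, the assertion that $\ZZ[\Gamma,\alpha]=\ms{Z}_{\Gamma,\alpha}/\mc{I}$ ``sits inside'' $\QQ[x_F\mid F\in\mc{F}]/\mc{I}$ is exactly the statement $\ms{Z}_{\Gamma,\alpha}\cap\ker\mu=\mc{I}$, which you assume rather than prove; the paper reduces this to Lemma~\ref{lem_isom_face_ring_graph_cohom_over_Q} in its kernel computation rather than taking it as given.
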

\begin{proof}
First, we consider a map 
$$\nu \colon \ms{Z}_{\Gamma, \alpha} \longrightarrow H^\ast_T(\Gamma,\alpha),$$ 
which is the restriction of $\mu$ defined in \eqref{eq_mu:poly->H*BT} 
to $ \ms{Z}_{\Gamma, \alpha}$. 
Then we have the
following commutative diagram:
\begin{equation}\label{eq_comm_diag_face_ring_graph_cohom}
\begin{tikzcd}
\QQ[x_F \mid F\in \mc{F}] \arrow[r, two heads, "\mu"]  & 
H^\ast_T(\Gamma, \alpha;\QQ)  \\
\ms{Z}_{\Gamma, \alpha} \arrow[u, tail, "\iota_1"] \arrow[r, "\nu" ] &
H^\ast_T(\Gamma, \alpha). \arrow[u, tail, "\iota_2"]
\end{tikzcd}
\end{equation}
We prove the theorem by showing that $\nu$ is a surjective 
homomorphism with kernel $\mathcal{I}$.

We first show the surjectivity. Lemma \ref{lem_isom_face_ring_graph_cohom_over_Q} implies that $H^\ast_T(\Gamma, \alpha;\QQ)$ is generated by the rational Thom classes $\tau_F$. Hence, any element in $H^\ast_T(\Gamma, \alpha)$ is an integral linear combination of rational Thom classes, say $g\colonequals \sum_{F\in \mathcal{F} }a_F\tau_F$ for some $a_F\in \ZZ$. Moreover, $g$ satisfies 
\begin{equation}\label{eq_cond_for_f(v)}
g(v)\in H^\ast(BT),\quad \text{for all $v\in \mathcal{V}(\Gamma)$,}
\end{equation}
by Definition \ref{def_equiv_cohom_of_graph}, 
which means $\sum_{F\in \mathcal{F} }a_F x_F\in \QQ[x_F \mid F\in \mc{F}]$ is indeed an element in $\mathscr{Z}_{\Gamma, \alpha}$ because of \eqref{eq_subcollection_integrality_condition}. Hence, the map $\nu$ is surjective. 


Next, we show that $\ker \nu=\mathcal{I}$. Recall that the map $\mu$ factors through 
the map 
\[
\QQ[x_F \mid F\in \mc{F}]/\mc{I} \longrightarrow H^\ast_T(\Gamma, \alpha;\QQ)
\]
by Lemma \ref{lem_isom_face_ring_graph_cohom_over_Q}. Hence, we have $\ker\mu =\mathcal{I}$. Moreover, the commutativity of \eqref{eq_comm_diag_face_ring_graph_cohom} gives $\iota_2 \circ \nu = \mu \circ \iota_1$, 
which implies that $\ker\nu\subset \ker \mu=\mathcal{I}$. To show the reverse inclusion, take an element $h\in \ker\mu$. Since $\ker\mu=\mathcal{I}\subset \mathscr{Z}_{\Gamma, \alpha}$, we may assume that $h=\iota_1(h)$. Now, the commutativity of \eqref{eq_comm_diag_face_ring_graph_cohom} together with the injectivity of $\iota_2$ establishes $h\in \ker \nu$. 
\end{proof}

\begin{remark}
For a calculation of the weighted face ring $\ZZ[\Gamma,\alpha]$, when $\Gamma$ has exactly two vertices, see~\cite{DKS-proc}.
\end{remark}

\section{Torus orbifolds}\label{sec_torus_orb}

We now consider \emph{torus orbifolds}, $2n$-dimensional GKM-orbifolds equipped with $n$-dimensional torus actions which may have finite kernels.  
Using the results from the previous sections, we calculate their integral equivariant cohomology ring, in terms of generators and relations, when their ordinary odd degree cohomology vanishes.

\subsection{Locally standard torus orbifolds}
We begin with defining a \emph{locally standard} torus action on a torus orbifold.  
For each point 
$p\in X$, there exists an orbifold chart 
$(\tilde{U}, G, \phi \colon \tilde{U} \to U)$ of a 
$T$-invariant neighborhood $U$ around $p$, 
an equivariant diffeomorphism $\psi$ from an open subset $W$ of $\CC^n$ 
to $\tilde{U}$ and 
a surjective covering homomorphism $\xi \colon (S^1)^n \to T$
with $\ker\xi \cong G$ such that the following diagram commutes:
$$
\begin{tikzcd}
(S^1)^n \times W \arrow[two heads]{rr}{\xi\times ( \phi\circ\psi)}\arrow{d} && T\times U \arrow{d}\\
W \arrow{r}{\psi} & \tilde{U} \arrow{r}{\phi} & U,
\end{tikzcd}
$$
where vertical maps represent torus actions on $W$ and $U$, 
respectively. In particular, the action of $(S^1)^n$ on $W$ is the standard one. 
Such an orbifold together with a preferred orientation on each 
$T$-invariant suborbifold of codimension 1 is called a \emph{locally standard torus orbifold}. 
It is one of the immediate consequences of locally standard actions that the quotient $U/T$ is diffeomorphic to an open subset of 
\begin{equation}\label{eq_local_nbd_in_Q}
W/(S^1)^n\,\cong\, \RR^n_{\geq}\,\letbe\,\{ (x_1,\dots,x_n)\in \RR^n\mid x_i\geq 0\}.
\end{equation}

Weighted projective spaces are typical examples of locally standard torus orbifolds. 
Here we choose a particular half-dimensional torus actions. To be more precise, 
we consider a weighted projective space $\mbb{P}{(a_0, \dots, a_n)}$ 
as a quotient of $S^{2n+1}\subset \CC^{n+1}$ by a weighted $S^1$-action 
given by 
$$t\cdot (z_0, \dots, z_n)\,=\,(t^{a_0}z_0, \dots, t^{a_n}z_n)$$
with respect to some weight vector $(a_0, \dots, a_n)\in \NN^{n+1}$. 
Then, the standard $T^{n+1}$-action on $S^{2n+1}$ defines 
the residual action of an $n$-dimensional torus  $T^{n+1}/S^1\cong T^n$. 

\begin{example}\label{ex_tangential_repn_of_wCP}
Consider the weighted projective space $\mbb{P}{(1,2,3,6)}$, 
as a quotient space $S^7/S^1$, where $S^1$ acts on 
$S^7$ by 
$$t\cdot(z_1, z_2, z_3, z_4)\,=\,(tz_1, t^2z_2, t^3z_3, t^6z_4).$$
The standard $T^4$-action on $S^7$ induces a residual 
$T^4/S^1$-action on $\mbb{P}{(1,2,3,6)}$. 
Consider the following short exact sequence 
\begin{equation*}
\begin{tikzcd}
1\arrow{r} &S^1 \arrow{r}{\varpi} & T^4 \arrow{r}{\lambda} &T^3 \arrow{r}& 1,
\end{tikzcd}
\end{equation*}
where $\varpi(t)=(t,t^2, t^3, t^6)$ and 
$\lambda(t_1, t_2, t_3, t_4)=(t_1^{-2}t_2, t_1^{-3}t_3, t_1^{-6}t_4)$, so that 
$\ker(\lambda)={\rm im}(\varpi)$. 
Identifying ${\rm coker}(\varpi)$ with 
$T^3$ by taking a right splitting $\varrho \colon T^3 \to T^4$ defined by 
$\varrho(r_1, r_2, r_3)=(1, r_1, r_2, r_3)$, we calculate the orbifold tangential 
representations around each fixed point by a similar manner to
Example \ref{ex_wCP_111222}. The corresponding orbifold GKM-graph 
coincides with the one described in Figure \ref{fig_orb_tor_graph_CP1236}, 
which is indeed an orbifold torus graph. 
\end{example}

We continue this section by introducing a combinatorial model for 
a torus orbifold, and end up with studying its equivariant cohomology 
ring with integer coefficients. 

\begin{remark}
We refer to \cite{GKRW} for an exposition of torus orbifolds. 
The authors also discuss GKM-graphs for torus orbifolds, 
but they adopt $H^2(BT)$ as the target space of the axial function. 
The resulting equivariant cohomologies of torus orbifolds that they calculate are then taken with rational coefficients. 
\end{remark}

\subsection{Quotient construction}\label{subsec_canonical_model}
Given a $2n$-dimensional locally standard torus orbifold 
$\mathcal{X}=(X, \mathcal{U})$ we consider the orbit space $Q\letbe X/T$. 
The local standardness implies that any point in $Q$ has a neighborhood diffeomorphic to an open subset of \eqref{eq_local_nbd_in_Q}. The orbifold atlas $\mathcal{U}$ leads each of these neighborhoods to  fit together, so that the orbit space $Q$ 
has the structure of  a \emph{manifold with faces}, \cite[Definition 7.1.2]{BP-book}. 
The points in $Q$ 
corresponding to $0$-dimensional orbits and points corresponding 
to $(n-1)$-dimensional orbits are called \emph{vertices} and \emph{facets}, 
respectively. 

Let $\mathcal{F}(Q)\letbe \{F_1, \dots, F_m\}$ be the set of facets of 
$Q$ and $\pi\colon X\to Q$ be the orbit map. 
We choose elements 
\begin{equation}\label{eq_lambda_i}
\{\lambda_1, \dots, \lambda_m\}\subset H_2(BT)\,\cong\, \Hom (S^1, T)
\end{equation} 
such that $\lambda_i (S^1) \subset T$ is the isotropy subgroup of 
$\pi^{-1}(F_i)$, for $i=1, \dots, m$. We call each  $\pi^{-1}(F_i)$ a
\emph{characteristic suborbifold}.  
The local standardness of the torus action 
leads us to the next proposition. 
\begin{proposition}\label{prop_char_vector_lin_indep}
Let $v$ be a vertex of $Q$ and $F_{i_1}, \dots, F_{i_n}$ facets 
containing $v$. Then the set 
$\{\lambda_{i_1}, \dots, \lambda_{i_n}\}\subset H_2(BT)$ 
is linearly independent. 
\end{proposition}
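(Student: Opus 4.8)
The plan is to reduce the statement directly to the local standardness of the action near the vertex $v$, since the proposition is a purely local claim about the isotropy data at a single point. First I would exploit the quotient construction: a vertex $v\in Q$ corresponds to a fixed point $p\in X$ with $\pi(p)=v$, and local standardness gives an orbifold chart $(\tilde U, G, \phi\colon \tilde U\to U)$ around $p$ together with an equivariant diffeomorphism $\psi$ from an open subset $W$ of $\CC^n$ and a surjective covering homomorphism $\xi\colon (S^1)^n\to T$ with $\ker\xi\cong G$, as in the commutative diagram defining local standardness. Near the fixed point the $(S^1)^n$-action on $W$ is the standard one, so the $n$ coordinate hyperplanes $\{w_j=0\}$ correspond bijectively to the $n$ facets $F_{i_1},\dots,F_{i_n}$ of $Q$ meeting at $v$.

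The key step is to identify each $\lambda_{i_j}$ explicitly in terms of $\xi$. Under the standard $(S^1)^n$-action the isotropy subgroup of the $j$-th coordinate hyperplane is the $j$-th circle factor, whose inclusion is the standard primitive class $\epsilon_j\in\Hom(S^1,(S^1)^n)=H_2(B(S^1)^n)$. Pushing forward under the covering homomorphism, the isotropy subgroup of $\pi^{-1}(F_{i_j})$ is $\xi\circ\epsilon_j$, so $\lambda_{i_j}=\xi_\ast(\epsilon_j)\in H_2(BT)$ up to the choice made in \eqref{eq_lambda_i} (at worst a sign, which does not affect linear independence). Thus the set $\{\lambda_{i_1},\dots,\lambda_{i_n}\}$ is the image under $\xi_\ast\colon H_2(B(S^1)^n)\to H_2(BT)$ of the standard basis $\{\epsilon_1,\dots,\epsilon_n\}$ of $H_2(B(S^1)^n)\cong\ZZ^n$.

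It then remains to observe that $\xi_\ast$ is injective, which follows because $\xi\colon (S^1)^n\to T$ is a covering homomorphism: its kernel is the finite group $G$, so $\xi$ has finite fibres and the induced map on the rank-$n$ lattices $H_2(B(S^1)^n)\to H_2(BT)$ is a homomorphism of free abelian groups of the same rank with finite cokernel, hence has trivial kernel. Since a basis maps under an injective lattice homomorphism to a linearly independent set, the images $\lambda_{i_1},\dots,\lambda_{i_n}$ are linearly independent, as required.

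The main obstacle I anticipate is bookkeeping rather than any deep difficulty: one must be careful that the local model near $p$ genuinely realises all $n$ facets through $v$ as the images of the $n$ coordinate hyperplanes of $W$, and that the identification $\ker\xi\cong G$ is compatible with the chart structure so that $\xi_\ast$ is the correct map on lattices. Once the local standard chart is fixed and $\lambda_{i_j}$ is matched with $\xi_\ast(\epsilon_j)$, the linear independence is immediate from injectivity of $\xi_\ast$; the only genuinely substantive input is that a finite-kernel covering homomorphism of tori induces an injection on the degree-two homology lattices, which is elementary.
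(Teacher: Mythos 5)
Your argument is correct and is exactly the elaboration the paper intends: the paper offers no written proof, merely asserting that local standardness yields the proposition, and your reduction via the standard chart and the injectivity of $\xi_\ast$ on the degree-two homology lattices is the natural way to make that precise. The only minor imprecision is that $\lambda_{i_j}$ and $\xi_\ast(\epsilon_j)$ need only generate the same circle subgroup, so they agree up to a nonzero integer multiple rather than just a sign, but this does not affect linear independence.
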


Conversely, beginning with an $n$-dimensional manifold $Q$ with faces, we consider a function 
$$\lambda \colon \mathcal{F}(Q) \longrightarrow H_2(BT),$$  
satisfying the condition that  the vectors $\{\lambda(F_{i_1}), \dots, \lambda(F_{i_k})\}$ are
linearly independent whenever $F_{i_1}\cap \dots \cap F_{i_k} \neq \emptyset$. 
We call such a function $\lambda$ and a pair $(Q, \lambda)$ 
a \emph{characteristic function}, and a \emph{characteristic pair}, respectively. 
Due to technical reasons, we always assume that $Q$ is a $CW$-complex. 

Given a characteristic pair $(Q, \lambda)$, 
one can construct a torus orbifold $X(Q, \lambda)$ as follows:
\begin{equation}\label{eq_Definition_of_torus_orb}
X(Q, \lambda)\,\letbe\,(Q \times T) / \sim,
\end{equation}
where the equivalence relation $\sim$ is given by 
$(x,t) \sim (y,s)$  if and only if  $x=y$ and $t^{-1}s \in T_{F(x)}$.
Here, $F(x)$ denotes the face of $Q$ containing $x$ in its relative
interior and 
$T_{F(x)}$ is the torus generated by 
$\lambda(F_{i_1}), \dots, \lambda(F_{i_k})$, if 
$F(x)=F_{i_1} \cap \dots \cap F_{i_k}.$
We note that the $CW$-complex structure on $Q$ gives a 
$T$-CW complex structure on $X(Q, \lambda)$. 

The orbifold structure can be obtained in the same manner 
as described in \cite[Section 2.1]{PS} for quasitoric orbifolds.
We notice that $X(Q,\lambda)$ is equipped  with an $n$-dimensional 
torus $T$ action by multiplication on the second factor, and 
the orbit map is the projection of the first factor. 

Given a point $x\in X(Q, \lambda)$, 
let $F(x)$ be the face of $Q$
containing $\pi(x)$ in its relative interior. Then the isotropy subgroup 
of $x$ is the torus $T_{F(x)}$, which  yields the 
following proposition.
\begin{proposition}\label{prop_isotropy_conn}
Let $X(Q, \lambda)$ be a torus orbifold associated to a 
characteristic pair. Then every isotropy subgroup is 
connected.
\end{proposition}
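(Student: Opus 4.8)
The plan is to read off the isotropy subgroup of an arbitrary point $x\in X(Q,\lambda)$ directly from the equivalence relation defining the quotient construction in \eqref{eq_Definition_of_torus_orb}, and then to verify that the subgroup $T_{F(x)}$ appearing there is in fact a connected torus rather than merely a closed subgroup. First I would recall that $T$ acts on $X(Q,\lambda)=(Q\times T)/\!\sim$ by multiplication on the second factor, so for a class $[x,t]$ an element $s\in T$ fixes it precisely when $[x,st]=[x,t]$, which by the defining relation means $x=x$ and $(st)^{-1}t=t^{-1}s^{-1}t\in T_{F(x)}$; since $T$ is abelian this simplifies to $s^{-1}\in T_{F(x)}$, hence $s\in T_{F(x)}$. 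Thus the isotropy subgroup of $[x,t]$ is exactly $T_{F(x)}$, independent of $t$, as already asserted in the paragraph preceding the proposition.

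The remaining, and genuinely substantive, step is to show that $T_{F(x)}$ is connected. By definition $T_{F(x)}$ is the subtorus \emph{generated} by the one-parameter subgroups $\lambda(F_{i_1})(S^1),\dots,\lambda(F_{i_k})(S^1)$, where $F(x)=F_{i_1}\cap\dots\cap F_{i_k}$. I would argue that each $\lambda(F_{i_j})(S^1)$ is the image of a homomorphism $S^1\to T$ and hence is a connected subgroup, and that the subgroup generated by finitely many connected subgroups of a compact abelian Lie group is again connected: indeed it equals the image of the product homomorphism $(S^1)^k\to T$, $(t_1,\dots,t_k)\mapsto \prod_j \lambda(F_{i_j})(t_j)$, and a continuous homomorphic image of a connected group is connected. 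This image is a closed connected subgroup of $T$, so it is a subtorus, and therefore $T_{F(x)}$ is a torus, in particular connected.

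The main obstacle, such as it is, lies in the phrase \emph{``the torus generated by''}: one must be slightly careful that the subgroup generated by the images of the $\lambda(F_{i_j})$ is genuinely closed, so that calling it a torus is justified. I would handle this by exhibiting it as the image of the compact group $(S^1)^k$ under the continuous product homomorphism above; the image of a compact group under a continuous homomorphism is automatically compact, hence closed in the Hausdorff group $T$, which removes any topological subtlety. I note that connectedness here does \emph{not} require the linear independence of the $\lambda(F_{i_j})$ guaranteed by Proposition~\ref{prop_char_vector_lin_indep}; that independence governs the rank (and the orbifold singularity structure) of $T_{F(x)}$, but connectedness is purely a consequence of the subgroup being generated by images of circles. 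This is precisely the structural feature that the quotient construction builds in and that makes the hypothesis ``all isotropy subgroups connected'' of Theorem~\ref{thm_geometric_side} automatic for every $X(Q,\lambda)$.
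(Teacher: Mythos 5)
Your proposal is correct and follows the same route as the paper, which simply observes (in the paragraph preceding the proposition) that the isotropy subgroup of a point $x$ is the torus $T_{F(x)}$ built into the quotient construction \eqref{eq_Definition_of_torus_orb}, hence connected. You merely fill in the details the paper leaves implicit — the computation from the equivalence relation and the fact that a subgroup generated by images of circles is a closed connected subtorus — both of which are accurate.
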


The next theorem extends \cite[Lemma 4.5]{MP} and 
\cite[Lemma 2.2]{PS} to torus orbifolds, whose proof is similar to the proof of \cite[Proposition 1.8]{DJ} where we note that $H^{2}(Q;\mathbb{Z}^{n})$ is isomorphic to $[Q, BT]$, hence, the assumption for $H^{2}(Q;\mathbb{Z}^{n})$ being trivial implies 
that any principal $T$-bundle over $Q$ is trivial, which plays an important role 
in the proof. 
\begin{theorem}\label{thm_canonical_model}
Let $\mathcal{X}=(X, \mathcal{U})$ be a locally standard torus orbifold with 
$T$-action and $Q$ the orbit space such that $H^{2}(Q;\mathbb{Z}^n)$ is trivial.  
Let 
$\lambda\colon \mathcal{F}(Q) \to \ZZ^n \cong H_2(BT)$ 
be the characteristic function defined by $\lambda(F_i)=\lambda_i$
as in \eqref{eq_lambda_i}.  
Then there is an equivariant homeomorphism $f$ between $X$ and 
$X(Q, \lambda)$ such that the following diagram commutes:
\begin{equation*}
\begin{tikzcd}[column sep=small]
X(Q, \lambda) \arrow{rr}{f} \arrow{rd}[swap]{pr_1}&& X\arrow{ld}{\pi}\\
&Q&
\end{tikzcd}
\end{equation*}
where $pr_1$ is the projection onto the first factor and $\pi$  is the 
orbit map.
\end{theorem}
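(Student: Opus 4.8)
The plan is to construct the equivariant homeomorphism $f\colon X(Q,\lambda)\to X$ by descending the torus action to the quotient and inverting the identification map that defines $X(Q,\lambda)$. The guiding principle, following the proof of \cite[Proposition 1.8]{DJ}, is that the local standardness of the $T$-action on $X$ already encodes, face by face, exactly the gluing data $(x,t)\sim(y,s)\iff x=y,\ t^{-1}s\in T_{F(x)}$ that appears in \eqref{eq_Definition_of_torus_orb}. Concretely, I would first note that the orbit map $\pi\colon X\to Q$ admits a continuous section $\sigma\colon Q\to X$ over a neighbourhood of each point, since over the interior of a face $F(x)$ the fibre of $\pi$ is the single orbit $T/T_{F(x)}$ and local standardness provides a standard chart identifying this orbit with the corresponding orbit in $\CC^n$. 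The candidate map is then $f([x,t])\letbe t\cdot\sigma(x)$, and one checks it is well-defined precisely because the isotropy of $\sigma(x)$ is $T_{F(x)}$, which by the choice of $\lambda(F_i)=\lambda_i$ in \eqref{eq_lambda_i} matches the group one quotients by in the source.

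First I would establish that $f$ is well-defined and $T$-equivariant, and that it makes the displayed triangle commute: equivariance and $pr_1=\pi\circ f$ are immediate from the formula $f([x,t])=t\cdot\sigma(x)$ together with $\pi(t\cdot\sigma(x))=\pi(\sigma(x))=x$. Well-definedness reduces to showing that the isotropy subgroup of any point over the interior of $F(x)$ is exactly $T_{F(x)}$; the inclusion $T_{F(x)}\subseteq\mathrm{Stab}$ is built into $\lambda$, while the reverse inclusion uses the local standard chart, where the isotropy of a point in $W\subseteq\CC^n$ is the coordinate subtorus spanned by the vanishing coordinates, and this coordinate subtorus is carried onto $T_{F(x)}$ under $\xi$. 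Next I would prove bijectivity: injectivity follows because two points with the same image have the same orbit and the same isotropy, forcing their representatives to differ by an element of $T_{F(x)}$; surjectivity follows because $\pi$ is surjective and every point of $X$ lies in some orbit hit by $t\cdot\sigma(x)$. Finally, since both $X(Q,\lambda)$ and $X$ carry compact Hausdorff topologies with $f$ continuous and a continuous inverse assembled from the local sections, $f$ is a homeomorphism.

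The main obstacle will be the global construction of the section, or equivalently, showing that the locally defined identifications patch together consistently over all of $Q$. This is exactly where the hypothesis $H^2(Q;\ZZ^n)=0$ enters: the obstruction to gluing the local product structures $\pi^{-1}(U)\cong U\times(T/T_F)$ into a coherent description of $X$ as $X(Q,\lambda)$ lives in a sheaf cohomology group computing principal-bundle-type data over $Q$, and the vanishing of $H^2(Q;\ZZ^n)$ kills this obstruction. I would therefore organise the argument as a Mayer--Vietoris or Čech-cohomology computation over the face structure of $Q$, exactly paralleling how \cite[Proposition 1.8]{DJ} uses the simple connectivity/cohomological triviality of the polytope to trivialise the relevant cocycle. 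The CW-assumption on $Q$ together with Proposition~\ref{prop_isotropy_conn} (ensuring all isotropy is connected, so no finite-group ambiguity obstructs the identification) makes this cohomological vanishing directly applicable.
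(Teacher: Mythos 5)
Your proposal follows essentially the same route as the paper, which itself only sketches the argument by deferring to \cite[Proposition~1.8]{DJ} and remarking that $H^{2}(Q;\mathbb{Z}^{n})\cong[Q,BT]$ is used to trivialise the relevant principal $T$-bundle over $Q$. You correctly identify the construction $f([x,t])=t\cdot\sigma(x)$ from a section, the verification that local standardness makes the isotropy at points over $F(x)$ equal to $T_{F(x)}$, and the role of the cohomological hypothesis in patching the local trivialisations, so no genuine gap remains beyond the level of detail the paper itself omits.
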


\begin{example}\label{ex_even_sphere}
Let $Q$ be the suspension of the $(n-1)$-simplex $\Delta^{n-1}$ for $n\geq 2$, 
see Figure \ref{fig_susp_of_simplex} for the case when $n=2$ and $3$. 
Then there are exactly $n$ facets, say $F_1, \dots, F_n$, on $Q$. 
Consider a characteristic function 
$$\lambda\colon \{F_1, \dots, F_n\} \longrightarrow H_2(BT)$$
defined by $\lambda(F_j)\letbe \sum_{i=1}^n a_{ij}\epsilon_i\in H_2(BT)$.
The integers $(a_{ij})_{1\leq i,j \leq n}$ form an $n\times n$ 
square matrix $\Lambda$ which we regard as an 
automorphism on $\mathbb{R}^{n}$. Now the space 
\begin{equation}\label{eq_orb_lens_sp}
(\Delta^{n-1} \times T)/\sim, 
\end{equation}
where the equivalence relation $\sim$ is same as in
\eqref{eq_Definition_of_torus_orb}, can be identified with a quotient 
of an odd sphere $S^{2n-1}$ by the action of the finite group 
$$\ker(\exp \Lambda\colon T \lra T).$$
The resulting space is 
known as an \emph{orbifold lens space}, see \cite[Section 3.3]{BSS}. 
Hence, the resulting torus orbifold $X(Q, \lambda)$ is exactly 
the suspension of an orbifold lens space. We refer to \cite{DKS-proc}
for the discussion on these spaces. 
In particular, if the determinant of $\Lambda$ is $\pm1$, i.e. 
the set $\{\lambda(F_1), \dots, \lambda(F_n)\}$ forms a $\ZZ$-basis 
of $H_2(BT)$, then the resulting space \eqref{eq_orb_lens_sp} is 
homeomorphic to  $S^{2n-1}$. Hence, the resulting 
torus orbifold $X(Q, \lambda)$ is homeomorphic to $S^{2n}$. 
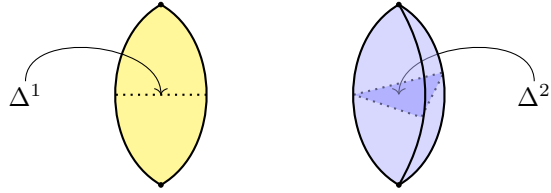
\begin{figure}
\begin{tikzpicture}[scale=0.6]
\draw[thick, fill=yellow!50] (0,0) [out=30, in=-30] to (0,4) [out=210, in=150] to (0,0);
\draw [fill] (0,0) circle [radius=0.05];
\draw [fill] (0,4) circle [radius=0.05];
\draw[thick, dotted] (-1,2)--(1,2);
\node at (-3,2) {$\Delta^1$};
\draw[->] (-3,2.3) [out=90, in=90] to (0,2);

\begin{scope}[xshift=150]
\draw[thick, dotted, fill=blue!30] (-1,2)--(1,2.5)--(0.5, 1.5)--cycle;

\draw[->] (3,2.3) [out=90, in=90] to (0,2);
\node at (3,2) {$\Delta^2$};

\draw[fill=blue!30, opacity=0.5] (0,0) [out=30, in=-30] to (0,4) [out=210, in=150] to (0,0);
\draw[thick] (0,0) [out=30, in=-30] to (0,4) [out=210, in=150] to (0,0);
\draw[thick] (0,0) [out=60, in=-60] to (0,4);
\draw [fill] (0,0) circle [radius=0.05];
\draw [fill] (0,4) circle [radius=0.05];

\end{scope}
\end{tikzpicture}
\caption{Suspensions of $\Delta^1$ and  $\Delta^2$.}
\label{fig_susp_of_simplex}
\end{figure}
\end{example}

\subsection{Equivariant cohomology ring of $X(Q, \lambda)$}
Given a characteristic pair $(Q, \lambda)$ for a torus orbifold, 
one can derive an orbifold GKM-graph as follows. Let 
\[\Gamma\,=\,(\mc{V}(\Gamma), \mc{E}(\Gamma))\] 
be the $1$-skeleton of  $Q$, 
which is an $n$-valent graph. 
We define a function 
\begin{equation*}\label{eq_axial_function}
\alpha \colon \mathcal{E}(\Gamma) \longrightarrow H^2(BT;\QQ)
\end{equation*}
by: if $e=F_{k_1} \cap \dots \cap F_{k_{n-1}}\in \mathcal{E}(\Gamma)$
with the initial vertex $i(e)=e\cap F_{k_n}$, then $\alpha(e)$ is defined by 
the following system of equations
\begin{equation}\label{eq_how_to_determine_axial_ftn}
\begin{cases} 
\left< \alpha(e), \lambda(F_{k_1}) \right>\,=\, \cdots \,=\, 
\left< \alpha(e), \lambda(F_{k_{n-1}}) \right>\,=\,0;\\
\left< \alpha(e), \lambda(F_{k_n}) \right>\,=\,1,  
\end{cases}
\end{equation}
where $\langle~,~ \rangle$ denotes the natural paring between
cohomology and homology. In particular, the integers $r_e$ (see Definition \ref{def_orb_GKM_graph}) are described below in the proof of Lemma \ref{lem_char_ftn_induces_axial_ftn}.

Intrinsically, each characteristic vector represents the $S^1$-subgroup of 
$T$ which acts trivially on 
characteristic suborbifold. Hence, each edge $e\in \mathcal{E}(\Gamma)$ 
represents an intersection of $n-1$ many characteristic suborbifolds. 
Indeed, this intersection is homeomorphic to a $2$-dimensional sphere and 
isomorphic to a spindle as an orbifold, 
fixed by a rank $n-1$ subgroup of $T$. The collection of equations in the
first line of \eqref{eq_how_to_determine_axial_ftn} contains the information 
of circle subgroups of $T$ which acts trivially on an invariant $2$-sphere, 
and the second equation tells us the the residual  $S^1$-action. 

\begin{lemma}\label{lem_char_ftn_induces_axial_ftn}
Let $(Q, \lambda)$ be a characteristic pair associated to a 
torus orbifold. Then the function $\alpha$ defined by
\eqref{eq_how_to_determine_axial_ftn}
is an axial function on $\Gamma$. In particular, the set 
$\{\alpha(e) \mid e\in \mathcal{E}_p(\Gamma), 
p\in \mathcal{V}(\Gamma)\}$ is linearly independent.
\end{lemma}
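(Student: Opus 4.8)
The plan is to check directly that the function $\alpha$ defined by the system \eqref{eq_how_to_determine_axial_ftn} is well-defined and then verify conditions~(1) and~(2) of Definition~\ref{def_orb_torus_graph}, with the linear independence claim falling out as the content of condition~(1). Throughout I use that, as $Q$ is a manifold with faces arising from a locally standard action, each vertex lies in exactly $n$ facets, so that Proposition~\ref{prop_char_vector_lin_indep} applies.

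\emph{Well-definedness.} Fix an oriented edge $e=F_{k_1}\cap\dots\cap F_{k_{n-1}}$ with $i(e)=e\cap F_{k_n}$. The vertex $i(e)$ lies in the $n$ facets $F_{k_1},\dots,F_{k_n}$, so Proposition~\ref{prop_char_vector_lin_indep} guarantees that $\{\lambda(F_{k_1}),\dots,\lambda(F_{k_n})\}$ is a basis of $H_2(BT;\QQ)\cong\QQ^n$. The equations in \eqref{eq_how_to_determine_axial_ftn} prescribe the values of the linear functional $\alpha(e)\in H^2(BT;\QQ)=\Hom(\QQ^n,\QQ)$ on this basis, and hence determine $\alpha(e)$ uniquely.

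\emph{Condition (1), linear independence at a vertex.} Let $p\in\mathcal{V}(\Gamma)$ lie in the facets $F_{k_1},\dots,F_{k_n}$. The $n$ outgoing edges at $p$ are $e_j\letbe\bigcap_{i\neq j}F_{k_i}$ for $j=1,\dots,n$, each with $i(e_j)=p=e_j\cap F_{k_j}$. Reading off \eqref{eq_how_to_determine_axial_ftn}, the functional $\alpha(e_j)$ satisfies $\langle\alpha(e_j),\lambda(F_{k_i})\rangle=\delta_{ij}$, so $\{\alpha(e_1),\dots,\alpha(e_n)\}=\alpha(\mathcal{E}_p(\Gamma))$ is precisely the dual basis of $\{\lambda(F_{k_1}),\dots,\lambda(F_{k_n})\}$ in $H^2(BT;\QQ)$. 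A dual basis is itself a basis, so $\alpha(\mathcal{E}_p(\Gamma))$ is linearly independent. This is condition~(1), and simultaneously it proves the final assertion of the lemma.

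\emph{Condition (2), integrality of the relation between $\alpha(e)$ and $\alpha(\overline{e})$.} With $e=F_{k_1}\cap\dots\cap F_{k_{n-1}}$ joining $p=i(e)$ to $q=t(e)$, the vertex $p$ lies in an extra facet $F_{k_n}$ and $q$ in an extra facet $F_\ell$, with $F_{k_n}\neq F_\ell$. By \eqref{eq_how_to_determine_axial_ftn}, both $\alpha(e)$ and $\alpha(\overline{e})$ vanish on each of $\lambda(F_{k_1}),\dots,\lambda(F_{k_{n-1}})$; these $n-1$ vectors are linearly independent (again by Proposition~\ref{prop_char_vector_lin_indep}), so their annihilator $L\subset H^2(BT;\QQ)$ is one-dimensional. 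As $\alpha(e)$ and $\alpha(\overline{e})$ are nonzero elements of $L$, we may write $\alpha(\overline{e})=c\,\alpha(e)$ for some $c\in\QQ\setminus\{0\}$. Since $L\cap H^2(BT)$ is infinite cyclic, clearing denominators in this relation produces positive integers $r_e,r_{\overline{e}}$ for which the common value $r_e\alpha(e)=\pm r_{\overline{e}}\alpha(\overline{e})$ lies in $L\cap H^2(BT)\subset H^2(BT)$, which is condition~(2). The only point requiring care is the bookkeeping of which facet distinguishes the two endpoints of $e$; once the common annihilator line is identified as one-dimensional, the passage to an integral multiple is routine, and this rational-scaling step is the genuinely new feature relative to the manifold case treated in \cite{MMP}.
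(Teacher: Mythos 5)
Your argument is correct, and at bottom it runs along the same lines as the paper's: both proofs verify the two axioms of an abstract orbifold torus graph directly, with condition (1) coming from the linear independence of the characteristic vectors at a vertex (your dual-basis phrasing is a cleaner version of what the paper dispatches in one line) and condition (2) coming from the fact that $\alpha(e)$ and $\alpha(\overline{e})$ satisfy the same $n-1$ vanishing conditions. The real difference is explicitness. The paper writes $\alpha(e)$ in coordinates as the last row of $\Lambda_{i(e)}^{-1}$, where $\Lambda_{i(e)}$ is the matrix \eqref{eq_lambda_v} of characteristic vectors at $i(e)$; since the last row of the adjugate is a vector of cofactors depending only on the first $n-1$ columns, and those columns are shared by $\Lambda_{i(e)}$ and $\Lambda_{t(e)}$, one gets $r_e\alpha(e)=\pm r_{\overline{e}}\alpha(\overline{e})$ with the \emph{explicit} choice $r_e=\lvert\det\Lambda_{i(e)}\rvert$ and an explicit integral common value. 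Your annihilator-line argument (the line $L$ is rational, $L\cap H^2(BT)$ is infinite cyclic, clear denominators) establishes the same statement but only existentially; the minor care you flag is exactly that one must cross-multiply so the two integral multiples agree, which is routine as you say. The only thing lost in your version is the determinant formula for $r_e$, which the paper reuses (e.g.\ in Proposition \ref{prop_elements_of_wFace_ring}(1) and in relating $r_e$ to the order of the local group), so it is worth recording even though the lemma itself does not require it.
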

\begin{proof}
Let $\lambda(F_j)=\sum_{i=1}^n a_{ij}\epsilon_i\in H_2(BT)$, for 
$j=1, \dots, m$. Identifying $H_2(BT)$ and $\ZZ^n$ by associating $\ep_i$ with the $i$-th standard unit vector in $\ZZ^n$, we may write $\lambda(F_j)=(a_{1j}, \dots, a_{nj})\in \ZZ^n$. 
For each vertex $v$ of $Q$ with $v=F_{i_1} \cap \dots \cap F_{i_n}$, we write 
\begin{equation}\label{eq_lambda_v}
\Lambda_v \,\letbe\, \begin{bmatrix}\lambda(F_{i_1} )^t & \cdots &  \lambda(F_{i_n} )^t\end{bmatrix}. 
\end{equation} 

Given an oriented edge 
$e=F_{k_1} \cap \dots \cap F_{k_{n-1}}$ of $Q$, 
let $i(e)=e\cap F_{k_n}$ be the initial vertex and 
$t(e)= e\cap F_{k_n'}$ be the terminal vertex. Consider  
$\Lambda_{i(e)}$  and $\Lambda_{t(e)}$ with 
\[
r_e\letbe \big| \det \Lambda_{i(e)} \big| ~\text{ and } ~
r_{\bar{e}}\letbe \big| \det \Lambda_{t(e)} \big|,
\] 
respectively. 
Let $\alpha(e)_{\{\epsilon_1^\ast, \dots, \epsilon_n^\ast\}}$ be the coordinate expression of $\alpha(e)$ with respect to the basis $\{\epsilon_1^\ast, \dots, \epsilon_n^\ast\}\subset H^2(BT)$. Then the system of equations \eqref{eq_how_to_determine_axial_ftn} implies that 
\begin{align*}
\alpha(e)_{\{\epsilon_1^\ast, \dots, \epsilon_n^\ast\}} \cdot \Lambda_{i(e)}\,=\,(0,\dots,0,1) \quad \text{and} \quad  \alpha(\bar{e})_{\{\epsilon_1^\ast, \dots, \epsilon_n^\ast\}} \cdot \Lambda_{t(e)}\,=\,(0,\dots,0,1).
\end{align*}
Hence, we have 
\begin{align*}
\alpha(e)_{\{\epsilon_1^\ast, \dots, \epsilon_n^\ast\}}\,=\,(0,\dots,0,1) \cdot \Lambda_{i(e)}^{-1} \quad \text{and} \quad  \alpha(\bar{e})_{\{\epsilon_1^\ast, \dots, \epsilon_n^\ast\}}\,=\,(0,\dots,0,1)\cdot \Lambda_{t(e)}^{-1}, 
\end{align*}
which are the last rows of $\Lambda_{i(e)}^{-1}$ and $\Lambda_{t(e)}^{-1}$, respectively. Moreover, since the first $n-1$ columns of $\Lambda_{i(e)}$ and $\Lambda_{t(e)}$
are identical, 
the last row of $\text{adj}(\Lambda_{i(e)})$ agrees with the last row of $\text{adj}(\Lambda_{t(e)})$. This implies that 
\[
\det \Lambda_{i(e)}\cdot \alpha(e)_{\{\epsilon_1^\ast, \dots, \epsilon_n^\ast\}} \,=\,
\det \Lambda_{t(e)}\cdot \alpha(\bar e)_{\{\epsilon_1^\ast, \dots, \epsilon_n^\ast\}} .
\]
Hence, we get $r_e\alpha(e)=\pm r_{\overline{e}}\alpha(\overline{e})$ as desired. 

The second assertion follows immediately from the linear independence 
of characteristic vectors around a vertex. 
\end{proof}

When $(\Gamma, \alpha)$ is obtained from a characteristic pair $(Q, \lambda)$,
certain elements of weighted face ring $\ZZ[\Gamma, \alpha]$ 
can be immediately read off from $(Q, \lambda)$. 
\begin{proposition}\label{prop_elements_of_wFace_ring}
\hfill
\begin{enumerate}
\item For each facet $F$ of $Q$, let 
$\ell_F\letbe  {\rm lcm}\left\{ |\det \Lambda_v| ~\big|~ v\in \mathcal{V}(F) \right\}$, 
where $\Lambda_v$ is the $n\times n$ 
square matrix as defined in \eqref{eq_lambda_v}.
Then, $\ell_Fx_{F}$ is an element of $\ms{Z}_{\Gamma, \alpha}$. 
\item Let $\lambda(F_j)=\sum_{i=1}^n a_{ij}\epsilon_i\in H_2(BT)$ for 
facets $F_1, \dots, F_m$ of $Q$. Then 
\begin{equation}\label{eq_global_elt}
\bigg\{ \sum_{j=1}^{m} a_{ij}x_{F_j} ~\Big|~ i=1, \dots, n\bigg\}
\end{equation}
are elements of $\ms{Z}_{\Gamma, \alpha}$. 
%

\end{enumerate}
\end{proposition}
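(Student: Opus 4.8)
The plan is to reduce everything to the single defining condition of $\ms{Z}_{\Gamma,\alpha}$ in \eqref{eq_subcollection_integrality_condition}, namely that for the proposed element $f$ one has $\mu(f)|_v\in H^\ast(BT)$ at every vertex $v\in\mc{V}(\Gamma)$. The key input is the description of the axial function from Lemma~\ref{lem_char_ftn_induces_axial_ftn}: for an edge $e$ with $i(e)=v$, the vector $\alpha(e)$ is the last row of $\Lambda_v^{-1}$, as recorded in \eqref{eq_axial_fun_coordinate}. Reformulated, if $v=F_{i_1}\cap\dots\cap F_{i_n}$ and $e_1,\dots,e_n$ are the outgoing edges, where $e_\ell$ omits the facet $F_{i_\ell}$, then \eqref{eq_how_to_determine_axial_ftn} says $\langle \alpha(e_\ell),\lambda(F_{i_m})\rangle=\delta_{\ell m}$, so $\{\alpha(e_1),\dots,\alpha(e_n)\}$ is precisely the dual basis of $\{\lambda(F_{i_1}),\dots,\lambda(F_{i_n})\}$ in $H^2(BT;\QQ)$. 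Both parts follow from this duality.

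For part (1), I would first note that since the facet $F$ is an $(n-1)$-dimensional face, at each vertex $v\in\mc{V}(F)$ exactly one of the $n$ outgoing edges fails to lie in the subgraph $\Gamma'$ defining $F$; call it $e_v$. Hence the rational Thom class \eqref{eq_rational_thom_class} collapses to the single factor $\tau_F(v)=\alpha(e_v)$ for $v\in\mc{V}(F)$, and $\tau_F(v)=0$ otherwise. As $\alpha(e_v)$ is a row of $\Lambda_v^{-1}$, its entries have denominators dividing $\det\Lambda_v$, so $|\det\Lambda_v|\,\alpha(e_v)\in H^2(BT)$. Since $\ell_F$ is a common multiple of $|\det\Lambda_v|$ over all $v\in\mc{V}(F)$, we obtain $\mu(\ell_F x_F)|_v=\ell_F\,\alpha(e_v)\in H^2(BT)$ at every vertex, which is the required integrality.

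For part (2), I would fix a vertex $v=F_{i_1}\cap\dots\cap F_{i_n}$ and compute $\mu\big(\sum_j a_{ij}x_{F_j}\big)|_v=\sum_{\ell=1}^n a_{i,i_\ell}\,\alpha(e_\ell)$, the terms with $F_j\notin\{F_{i_1},\dots,F_{i_n}\}$ dropping out because $\tau_{F_j}(v)=0$ there. Pairing this vector with each $\lambda(F_{i_m})$ and using the dual-basis relation gives $\langle \sum_\ell a_{i,i_\ell}\alpha(e_\ell),\lambda(F_{i_m})\rangle=a_{i,i_m}=\langle \epsilon_i^\ast,\lambda(F_{i_m})\rangle$. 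Because $\{\lambda(F_{i_1}),\dots,\lambda(F_{i_n})\}$ is a rational basis, this forces $\sum_\ell a_{i,i_\ell}\alpha(e_\ell)=\epsilon_i^\ast$, an integral class; as $v$ was arbitrary, $\sum_j a_{ij}x_{F_j}\in\ms{Z}_{\Gamma,\alpha}$, and in fact its image under $\mu$ is the constant (globally integral) assignment $v\mapsto\epsilon_i^\ast$.

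The main obstacle, I expect, lies in the bookkeeping of part (1) rather than in any deep difficulty: one must carefully confirm, via the codimension-one condition and the connection-invariance of a face, that the Thom class genuinely reduces to a single axial value at each vertex of $F$, and that passing to the \emph{lcm} over \emph{all} $v\in\mc{V}(F)$ is what is needed, since distinct vertices of a facet may carry different local determinants $|\det\Lambda_v|$. One should also check that the orientation convention in the Thom class \eqref{eq_rational_thom_class} (using outgoing edges $i(e)=v$) matches the one under which \eqref{eq_axial_fun_coordinate} expresses $\alpha(e)$ through $\Lambda_v^{-1}$, so that the denominator estimate is applied at the correct vertex. Part (2), by contrast, is a clean duality identity once the dual-basis structure is in place.
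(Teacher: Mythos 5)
Your proof is correct and follows essentially the same route as the paper: part (1) is exactly the ``straightforward from the definition of the rational Thom classes'' argument the paper alludes to (the Thom class of a facet reduces to a single axial value $\alpha(e_v)$, a row of $\Lambda_v^{-1}$, whose denominators are cleared by $\ell_F$), and part (2) is the paper's computation $\sum_\ell a_{ik_\ell}\alpha(e_{k_\ell})=\epsilon_i^\ast$, merely phrased via the dual-basis pairing $\langle\alpha(e_\ell),\lambda(F_{i_m})\rangle=\delta_{\ell m}$ instead of the explicit product $[a_{ik_1}\cdots a_{ik_n}]\cdot\Lambda_v^{-1}$.
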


\begin{proof}
The proof of~(1)  is straightforward from the definition of rational Thom classes, see \eqref{eq_rational_thom_class}. In order to prove (2), it is enough to show that the restriction of \eqref{eq_global_elt} to each vertex $v\in \mathcal{V}(\Gamma)$ is an element of $H^2(BT)$. If $v=F_{k_1} \cap \dots \cap  F_{k_n}$ for some facets $F_{k_1}, \dots, F_{k_n}$ of $Q$, the definitions of $\mu$ and rational Thom classes (see \eqref{eq_mu:poly->H*BT} and \eqref{eq_rational_thom_class}, respectively) yield 
$$
\mu\Big(\sum_{j=1}^{m} a_{ij}x_{F_j} \Big) (v)\,=\, 
\sum_{\ell=1}^n a_{i k_\ell}\tau_{F_{k_\ell}} (v)\,=\,
\sum_{\ell=1}^n a_{i k_\ell}\alpha(e_{k_\ell}),
$$
where $e_{k_\ell}= 
F_{k_1} \cap \dots \cap F_{k_{\ell-1}} \cap F_{k_{\ell+1}}\cap  \dots \cap F_{k_n}$ 
with initial vertex $v$.

The system of equations 
\eqref{eq_how_to_determine_axial_ftn} together with a coordinate expression $\alpha(e_{k_\ell})_{\{\epsilon_1^\ast ,\dots, \epsilon_n^\ast\}}$ as in the proof of Lemma \ref{lem_char_ftn_induces_axial_ftn} 
allows us to continue the computation as follows:
\begin{align}\label{eq_global_element_proof}
\begin{split}
\sum_{\ell=1}^n a_{i k_\ell}\alpha(e_{k_\ell})&\,=\,
\sum_{\ell=1}^n a_{i k_\ell} \cdot 
[ 0~ \cdots ~   0 ~ \underset{\ell \text{-th}}{1} ~ 0~ \cdots ~0 ] \cdot 
\Lambda_v^{-1}\\
&\,=\,[a_{ik_1} ~ \cdots ~ a_{ik_n}] \cdot \Lambda_v^{-1}\\
&\,=\,[ 0~ \cdots ~   0 ~ \underset{i \text{-th}}{1} ~ 0~ \cdots ~0 ]\in 
\ZZ^n.
\end{split}
\end{align}
The final equality follows because $[a_{ik_1} ~ \cdots ~ a_{ik_n}]$ is
the $i$-th row of $\Lambda_v$. 

Recall that we identify $H^2(BT)$ with $\ZZ^n$ via the basis 
$\{\epsilon_1^\ast, \dots, \ep_n^\ast\}$. Hence, the vector 
$[0~\cdots~0~1~0~\cdots~0]$ in the last line of \eqref{eq_global_element_proof} 
corresponds to $\ep_i^\ast \in H^2(BT)$, which establishes the claim. 
\end{proof}

\begin{remark}\label{rmk_global_minimal_elements}
\hfill
\begin{enumerate}
\item We note that the computation \eqref{eq_global_element_proof} 
shows that, for each $i=1, \dots, n$, the image of elements in the set 
\eqref{eq_global_elt} via the map $\mu$ are the constant functions 
$\epsilon_i^\ast\colon \mc{V}(\Gamma) \to H^2(BT)$ defined by 
$v \mapsto \epsilon_i^\ast$, for every $v\in \mc{V}(\Gamma)$. 
\item If $\sum_{F\in \mc{F}^{(k)}}c_Fx_F$ is an element of $\ms{Z}_{\Gamma, \alpha}$, 
so is a constant multiple 
$$r\cdot \sum_{F\in \mc{F}^{(k)}}c_Fx_F\,=\,\sum_{F\in \mc{F}^{(k)}}rc_Fx_F$$
for some $r\in \ZZ\setminus \{0\}$. Hence, we may choose the 
vector $(\tilde{c}_F)_{F\in \mc{F}^{(k)}}$ (up to sign) having minimal 
length in $((c_F)_{F\in \mc{F}^{(k)}} \otimes_\ZZ \RR)\cap \ZZ^{|\mc{F}^{(k)}|}$, 
such that 
$$\sum_{F\in \mc{F}^{(k)}}\tilde{c}_Fx_F \in \ms{Z}_{\Gamma, \alpha}.$$
We call such elements 
\emph{minimal Thom classes}.
\item In~\cite{BFR} the equivariant cohomology of weighted projective spaces with integral coefficients was calculated and we observe that their \emph{Courant functions} correspond to the minimal Thom classes associated to facets of the underlying simplex. Since all \emph{global polynomials} appear in $\ZZ[\Gamma,\alpha]$, by Remark~\ref{rmk_global_minimal_elements}(1), all of the generators of their algebra have been covered.
\item When $X(Q, \lambda)$ is a projective toric variety \cite{CLS, Fulton}, namely $Q$ is a lattice polytope and $\lambda$ is defined to be primitive outward normal vectors of facets, then the set of linear elements in equation~\eqref{eq_subcollection_integrality_condition} coincides with the set ${\rm CDiv}_T(X(Q, \lambda))$ of $T$-invariant Cartier divisors of $X(Q, \lambda)$. We refer to \cite[Section 4.2]{CLS} and \cite[Section 3.3]{Fulton} for more details.
\end{enumerate}\end{remark}

\begin{example}
Consider a characteristic pair $(\Delta^3, \lambda)$ where 
characteristic function $\lambda$ is defined by 
$\lambda(F_1)=(-2, -3, -6),$ $\lambda(F_2)=(1,0,0),$ $\lambda(F_3)=(0,1,0)$ 
and $\lambda(F_4)=(0,0,1)$ for facets $F_i$, where $1 \leq i \leq 4$. 
The induced orbifold  torus graph $(\Gamma, \alpha)$ and  
the rational Thom classes of degree $2$ are described in
Figure \ref{fig_orb_tor_graph_CP1236} and  
Figure \ref{fig_rat_Thom_classes}, respectively. 
Proposition \ref{prop_elements_of_wFace_ring}-(1) says that 
$6x_{F_i}$, $i=1, \dots, 4$, are elements of $\ms{Z}_{\Gamma, \alpha}$. 
However, the minimal Thom classes corresponding to facets are 
$6x_{F_1}$, $3x_{F_2}$, $2x_{F_3}$ and $x_{F_4}$. One can also see that 
$-2x_{F_1}+x_{F_2},~-3x_{F_1}+x_{F_3}, -6x_{F_1}+x_{F_4}$ are 
elements of $\ms{Z}_{\Gamma, \alpha}$, whose images via $\mu$ are 
constant functions represented by $\ep_1^\ast,~\ep_2^\ast$ and $\ep_3^\ast$, 
respectively.
\end{example}

The main theorem of this section is as follows:
\begin{theorem}\label{thm_main_thm_of_paper}
Let $(Q, \lambda)$ be a characteristic pair and $X$ the associated 
torus orbifold with $H^{odd}(X)=0$. Then there is an isomorphism 
$$H^{*}_T(X) \,\cong\, \ZZ[\Gamma, \alpha]$$
of $H^\ast(BT)$-algebras, where $(\Gamma, \alpha)$ is the orbifold torus graph 
obtained from $(Q, \lambda)$. Furthermore, if $H^\ast(X)$ is free over $\ZZ$, then 
$\ZZ[\Gamma, \alpha]$ is finitely generated. 
\end{theorem}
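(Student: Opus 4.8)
The plan is to establish the isomorphism $H^*_T(X)\cong\ZZ[\Gamma,\alpha]$ by chaining together the results developed earlier in the excerpt, and then to prove finite generation separately under the additional freeness hypothesis. The central observation is that a characteristic pair $(Q,\lambda)$ produces an orbifold torus graph $(\Gamma,\alpha)$ via Lemma~\ref{lem_char_ftn_induces_axial_ftn}, so $(\Gamma,\alpha)$ is indeed a legitimate orbifold torus graph (in particular an orbifold GKM-graph). This lets me invoke two things that have already been proved: Theorem~\ref{main-thm1} gives a ring isomorphism $\ZZ[\Gamma,\alpha]\cong H^*_T(\Gamma,\alpha)$, and Theorem~\ref{thm_geometric_side} gives an $H^*(BT)$-algebra isomorphism $H^*_T(X)\cong H^*_T(\Gamma,\alpha)$ whenever $X$ is a GKM-orbifold with connected isotropy subgroups, vanishing odd cohomology, and underlying space homotopic to a $T$-CW complex. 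Composing these two isomorphisms yields $H^*_T(X)\cong\ZZ[\Gamma,\alpha]$.

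The main work, then, is verifying that the torus orbifold $X$ attached to $(Q,\lambda)$ satisfies all the hypotheses of Theorem~\ref{thm_geometric_side}. First I would note that $X$ is a GKM-orbifold with respect to the $T$-action: the valence-$n$ graph $\Gamma$ is the $1$-skeleton of $Q$ and the pairwise linear independence of the weights at each fixed point is exactly the ``in particular'' clause of Lemma~\ref{lem_char_ftn_induces_axial_ftn}, while finiteness of fixed points and $1$-dimensional orbits follows from $Q$ being a compact manifold with faces with finitely many vertices and edges. Connectedness of all isotropy subgroups is supplied directly by Proposition~\ref{prop_isotropy_conn}. The $T$-CW structure is guaranteed because the quotient construction \eqref{eq_Definition_of_torus_orb} produces a $T$-CW complex from the $CW$-structure on $Q$, as remarked immediately after that equation; so $X$ is literally a $T$-CW complex and hence trivially homotopic to one. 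The hypothesis $H^{odd}(X)=0$ is assumed outright. One subtlety to address is effectiveness of the $T$-action (required by Theorem~\ref{thm_geometric_side}): I would reduce to the effective case by passing to $T/K$, where $K$ is the global (finite) kernel, noting that this does not change the equivariant cohomology as an $H^*(BT)$-algebra up to the identification $H^*(BT/K;\ZZ)\cong H^*(BT;\ZZ)$ since finite groups have no rational cohomology and the relevant classes are torsion-free; alternatively one observes the axial function already records the correct $T$-weights so the graph cohomology is computed with respect to $T$ directly.

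For the finite generation statement, I would argue as follows. By Theorem~\ref{main-thm1} it suffices to show $\ZZ[\Gamma,\alpha]$ is finitely generated as a ring (equivalently as an $H^*(BT)$-algebra), and for this I would show that the generators $x_F$ for $F\in\mathcal{F}$ already generate, with the minimal Thom classes of Remark~\ref{rmk_global_minimal_elements} providing an explicit integral generating set in each degree. The freeness of $H^*(X)$ over $\ZZ$ ensures that $H^*_T(X)$ is a free $H^*(BT)$-module of finite rank (by equivariant formality, which follows from $H^{odd}(X)=0$ together with freeness), and the finitely many faces of the compact polytopal complex $Q$ give finitely many generating Thom classes. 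I expect the main obstacle to be the bookkeeping around $\ms{Z}_{\Gamma,\alpha}$: unlike the manifold case (Remark~\ref{rmk_w_face_smooth_case}) where $\ms{Z}_{\Gamma,\alpha}=\ZZ[x_F\mid F\in\mathcal{F}]$, here $\ms{Z}_{\Gamma,\alpha}$ is a genuine subring cut out by the integrality condition \eqref{eq_subcollection_integrality_condition}, so I must verify that it is finitely generated over $\ZZ$ — this is where the minimal Thom classes $\ell_F x_F$ from Proposition~\ref{prop_elements_of_wFace_ring}(1) and the global elements \eqref{eq_global_elt} do the real work, and the freeness hypothesis is what forces the module to have finite rank so that finitely many such classes suffice.
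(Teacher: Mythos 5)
Your proof of the first assertion matches the paper's: the isomorphism is obtained by composing Theorem~\ref{thm_geometric_side} with Theorem~\ref{main-thm1}, with Proposition~\ref{prop_isotropy_conn} supplying connected isotropy groups and the $CW$-structure on $Q$ supplying the $T$-CW structure on $X$. That part is fine.

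The second assertion is where there is a genuine gap. You propose to show that ``the generators $x_F$ for $F\in\mathcal{F}$ already generate, with the minimal Thom classes providing an explicit integral generating set in each degree,'' with $\ell_F x_F$ and the global elements \eqref{eq_global_elt} ``doing the real work.'' This is false in general: the minimal Thom classes of Remark~\ref{rmk_global_minimal_elements}(2) are by definition \emph{linear} combinations $\sum_F c_F x_F$ of face classes of a fixed dimension, and the subring $\ms{Z}_{\Gamma,\alpha}$ is not generated by its degree-$2$ part. Theorem~\ref{thm_4-dim_equiv_cohom} makes this explicit even for $4$-dimensional torus orbifolds, where one needs additional degree-$4$ generators of the form $\sum c_{\alpha,i}\,x_{F_i}^{2-\alpha}x_{F_{i+1}}$ determined by the lattices $\mathbb{L}_k^{\deg 4}$; these are not products of degree-$2$ elements of $\ms{Z}_{\Gamma,\alpha}$. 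Indeed the paper stresses after the proof that finite generation of $\ZZ[\Gamma,\alpha]$ is \emph{not} algebraically obvious and is only obtained by geometric input. You do have the right geometric input in hand --- freeness of $H^\ast(X)$ plus $H^{odd}(X)=0$ gives, via Leray--Hirsch, a ring isomorphism $H^\ast_T(X)\cong H^\ast(X)\otimes H^\ast(BT)$ --- but you stop short of converting it into finite generation. The missing step, which is the paper's actual argument, is: ring generators may be taken to come from $H^2(BT)$ and from $H^\ast(X)\cong H^\ast_T(X)/\operatorname{im}\bigl(\pi^\ast\colon H^{>0}(BT)\to H^\ast_T(X)\bigr)$; the former are realised in $\ZZ[\Gamma,\alpha]$ by the global elements of Proposition~\ref{prop_elements_of_wFace_ring}(2), and the latter is a finitely generated abelian group concentrated in degrees $\le\dim X$, so it is hit by finitely many elements of $\ZZ[\Gamma,\alpha]$ of degree at most $\dim X$ (each graded piece being a finitely generated abelian group). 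Without this step your argument does not establish finite generation, and with the generators you name it would actually produce a proper subring.
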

\begin{proof}
A CW-complex structure on $Q$ defines a $T$-CW complex structure 
on $X$. Hence, the assumption $H^{odd}(X)=0$, together with 
Proposition \ref{prop_isotropy_conn}, leads us to apply Theorem \ref{thm_geometric_side}. 
 Finally, the first result follows from Theorem \ref{main-thm1}.

The proof of the second assertion is similar to the proof of 
\cite[Lemma 2.1]{BFR}. 
If $H^\ast(X)$ is free over $\ZZ$ and vanishes in odd degrees, 
the Serre spectral sequence of the fibration 
$$
\begin{tikzcd}
X\arrow{r} & ET\times_T X \arrow{r}{\pi} &  BT
\end{tikzcd}$$
degenerates at the $E_2$-page, 
and  there is a ring isomorphism 
$H_T^\ast(X)\cong H^\ast(X)\otimes H^\ast(BT)$ by the
Leray--Hirsh theorem. Hence, the ring 
generators come from either $H^\ast(X)$ or $H^2(BT)$. 
Recall from Remark \ref{rmk_global_minimal_elements}-(1) that 
the generators of $H^2(BT)$ are already elements of $\ms{Z}_{\Gamma, \alpha}$
which are degree $2$ elements of $\ZZ[\Gamma, \alpha]$. 
Moreover, the result of \cite[Theorem 1.1]{FP} implies that the set of elements in $H^\ast_T(X) (\cong \ZZ[\Gamma, \alpha])$ with degree less than or equal 
to the dimension of $X$ surjects onto 
$$H^\ast(X)\,\cong\, H^\ast_T(X)/ {\rm im}(\pi^\ast : H^{> 0}(BT)\longrightarrow H^\ast_T(X)),$$ 
which establishes 
the assertion. 
\end{proof}

From the algebraic point of view, it is not obvious that $\ZZ[\Gamma, \alpha]$
is finitely generated. The proof of the second assertion uses an application of geometry 
to answer this algebraic question. 


\begin{corollary}\label{cor_sing_cohom}
Let $X$ and $(\Gamma, \alpha)$ be as above. 
If $H^\ast(X)$ is free over $\ZZ$ and vanishes in all odd degrees, then 
$H^\ast(X)$ is isomorphic to $\ZZ[\Gamma, \alpha]/\mc{J}$, where 
$\mc{J}$ is the ideal generated by elements of \eqref{eq_global_elt}. 
\end{corollary}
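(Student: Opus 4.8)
The plan is to realise $H^\ast(X)$ as the quotient of the equivariant cohomology by the image of the positive-degree part of $H^\ast(BT)$, and then transport this description across the isomorphism of Theorem~\ref{thm_main_thm_of_paper}.

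First I would use equivariant formality. Since $H^\ast(X)$ is free over $\ZZ$ and concentrated in even degrees, the Serre spectral sequence of the Borel fibration $X\to ET\times_T X\xrightarrow{\pi} BT$ degenerates at $E_2$, exactly as in the proof of Theorem~\ref{thm_main_thm_of_paper}. The Leray--Hirsch theorem then gives an $H^\ast(BT)$-module isomorphism $H^\ast_T(X)\cong H^\ast(X)\otimes_\ZZ H^\ast(BT)$, and restricting to a fibre (equivalently, tensoring with the augmentation $H^\ast(BT)\to\ZZ$ that kills $H^{>0}(BT)$) identifies
\[
H^\ast(X)\,\cong\, H^\ast_T(X)/\big(\pi^\ast H^{>0}(BT)\big),
\]
where $\big(\pi^\ast H^{>0}(BT)\big)$ denotes the ideal generated by the image of the positive-degree elements of $H^\ast(BT)$. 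This is the very quotient already appearing in the proof of the second assertion of Theorem~\ref{thm_main_thm_of_paper}.

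Next I would apply Theorem~\ref{thm_main_thm_of_paper} to replace $H^\ast_T(X)$ by $\ZZ[\Gamma,\alpha]$ as an $H^\ast(BT)$-algebra, so that it remains only to identify the ideal $\big(\pi^\ast H^{>0}(BT)\big)$ inside $\ZZ[\Gamma,\alpha]$ with $\mc{J}$. Since $H^\ast(BT)=\ZZ[\epsilon_1^\ast,\dots,\epsilon_n^\ast]$, the augmentation ideal $H^{>0}(BT)$ is generated by the degree-two classes $\epsilon_1^\ast,\dots,\epsilon_n^\ast$, whence $\big(\pi^\ast H^{>0}(BT)\big)$ is the ideal generated by $\pi^\ast(\epsilon_1^\ast),\dots,\pi^\ast(\epsilon_n^\ast)$. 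Under the structure map making $H^\ast_T(\Gamma,\alpha)$ an $H^\ast(BT)$-algebra, each $\epsilon_i^\ast$ acts as the constant function $\epsilon_i^\ast\colon \mc{V}(\Gamma)\to H^2(BT)$; by Remark~\ref{rmk_global_minimal_elements}(1) this constant function is precisely the image under $\mu$ of $\sum_{j=1}^m a_{ij}x_{F_j}$. Hence $\pi^\ast(\epsilon_i^\ast)$ corresponds to the class of $\sum_{j=1}^m a_{ij}x_{F_j}$ in $\ZZ[\Gamma,\alpha]$, and the ideal $\big(\pi^\ast H^{>0}(BT)\big)$ coincides with the ideal $\mc{J}$ generated by the elements of \eqref{eq_global_elt}. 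Combining this with the first paragraph yields $H^\ast(X)\cong\ZZ[\Gamma,\alpha]/\mc{J}$.

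The step I expect to require the most care is this last identification: one must verify that the isomorphism of Theorem~\ref{thm_main_thm_of_paper} is compatible with the $H^\ast(BT)$-algebra structures on both sides in such a way that the augmentation ideal is carried \emph{exactly} onto $\mc{J}$, rather than merely onto an ideal with the same radical or the same rationalisation --- a distinction that genuinely matters in this integral setting. The essential input is Remark~\ref{rmk_global_minimal_elements}(1), which pins down $\pi^\ast(\epsilon_i^\ast)$ on the nose as $\sum_j a_{ij}x_{F_j}$; once this is in hand, the remaining verification is formal.
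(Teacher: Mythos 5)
Your proposal is correct and follows essentially the route the paper intends: the paper leaves the corollary unproved, but the identification $H^\ast(X)\cong H^\ast_T(X)/\operatorname{im}\bigl(\pi^\ast\colon H^{>0}(BT)\to H^\ast_T(X)\bigr)$ already appears verbatim in the proof of the second assertion of Theorem~\ref{thm_main_thm_of_paper}, and Remark~\ref{rmk_global_minimal_elements}(1) is exactly the ingredient that identifies $\pi^\ast(\epsilon_i^\ast)$ with the class of $\sum_j a_{ij}x_{F_j}$. You have correctly isolated the only delicate point, namely that the isomorphism is one of $H^\ast(BT)$-algebras so that the augmentation ideal is carried integrally onto $\mc{J}$.
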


\begin{remark}
Note that not every orbifold torus graph, as defined in Section~\ref{sec_torus_orb_graph}, originates 
from torus orbifolds as discussed in this section. For instance, the quotient of the rational graph equivariant cohomology $H^\ast_T(\Gamma, \alpha;\QQ)$ of the orbifold torus graph described 
in Figure \ref{fig_torus_orb_graph_without_geom} by $H^{>0}(BT;\QQ)$ 
fails Poincar\'{e} duality. Hence, results in \cite{Sat} imply that $(\Gamma, \alpha)$ cannot be induced from an orbifold whose odd degree cohomology vanishes.
\end{remark}

\begin{figure}[h]
\begin{tikzpicture}[scale=0.8]
\draw [fill] (0,0) circle [radius=0.05];
\draw [fill] (4,0) circle [radius=0.05];
\draw [fill] (4,4) circle [radius=0.05];
\draw [fill] (0,4) circle [radius=0.05];
\draw (0,0)--(4,0)--(4,4)--(0,4)--cycle;
\draw (0,0)--(4,4);
\draw (4,0)--(0,4);

\draw[thick, ->] (0,0)--(1,0);
\draw[thick, ->] (0,0)--(.7,.7);
\draw[thick, ->] (0,0)--(0,1);

\node[left] at (0,1) {\footnotesize$\frac{1}{3}\ep_2^\ast$};
\node[right] at (0.7, 0.7) {\footnotesize$\frac{1}{4}\ep_3^\ast$};
\node[below] at (1,0) {\footnotesize$\frac{1}{5}\ep_4^\ast$};
\node[left] at (-0.8,0.5) {\footnotesize$\frac{1}{2}\ep_1^\ast$};

\draw[thick, ->] (4,0)--(3,0);
\draw[thick, ->] (4,0)--(4-.7,.7);
\draw[thick, ->] (4,0)--(4,1);

\node[below] at (3,0) {\footnotesize$\frac{1}{5}\ep_4^\ast$};
\node[left] at (4-0.7, 0.7) {\footnotesize$\frac{1}{4}\ep_3^\ast$};
\node[right] at (4,1) {\footnotesize$\frac{1}{3}\ep_2^\ast$};
\node[right] at (4.8,0.5) {\footnotesize$\frac{1}{2}\ep_1^\ast$};

\draw[thick, ->] (0,4)--(0,3);
\draw[thick, ->] (0,4)--(.7,4-.7);
\draw[thick, ->] (0,4)--(1,4);

\node[left] at (0,3) {\footnotesize$\frac{1}{3}\ep_2^\ast$};
\node[right] at (0.7, 4-0.7) {\footnotesize$\frac{1}{4}\ep_3^\ast$};
\node[above] at (1,4) {\footnotesize$\frac{1}{5}\ep_4^\ast$};
\node[left] at (-0.8,3.5) {\footnotesize$\frac{1}{2}\ep_1^\ast$};

\draw[thick, ->] (4,4)--(3,4);
\draw[thick, ->] (4,4)--(4-.7,4-.7);
\draw[thick, ->] (4,4)--(4,3);

\node[above] at (3,4) {\footnotesize$\frac{1}{5}\ep_4^\ast$};
\node[left] at (4-0.7, 4-0.7) {\footnotesize$\frac{1}{4}\ep_3^\ast$};
\node[right] at (4,3) {\footnotesize$\frac{1}{3}\ep_2^\ast$};
\node[right] at (4.8,3.5) {\footnotesize$\frac{1}{2}\ep_1^\ast$};

\begin{scope}[xscale=0.6]
\draw (0,0) arc (270:90:2);
\draw[thick, -> ] (0,0) arc (270:225:2);
\draw[thick, -> ] (0,4) arc (90:135:2);
\end{scope}

\begin{scope}[xscale=0.6]
\draw (20/3,4) arc (90:0:2) arc (0:-90:2);
\draw[thick, -> ] (20/3,4) arc (90:45:2);
\draw[thick, -> ] (20/3,0) arc (270:315:2);
\end{scope}

\end{tikzpicture}
\caption{An orbifold torus graph without geometric origin.}
\label{fig_torus_orb_graph_without_geom}
\end{figure}
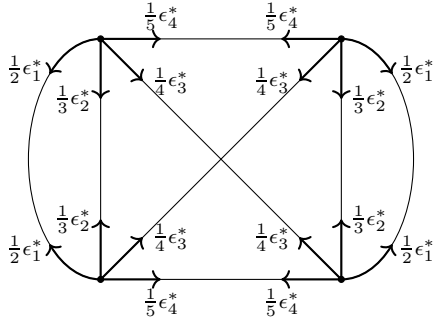

\section{An application: $4$-dimensional torus orbifolds}
In this section we consider the case when $Q$ is an $m(\geq 2)$-gon, namely a $2$-dimensional manifold with faces having $m$ vertices and $m$ sides. 
We begin by setting up the following notation. See Figure \ref{fig_char_ftn_for_polygon}, where the left picture gives the full description for the case when $m=2$.
\begin{itemize}
\item $\mathcal{F}^{(1)}=\{F_1, \dots, F_m\}$, the set of facets which are edges in this case. 
\item $\mathcal{F}^{(0)}=\{v_1, \dots, v_m\}$, the set of vertices, where 
$v_k= F_k \cap F_{k+1}.$
\item $\lambda \colon \mathcal{F}^{(1)} \to H_2(BT)$, 
a characteristic function and we write  
$$\lambda(F_k)\,\letbe\, a_k \epsilon_1 +  b_k \epsilon_2, \quad \text{ for some } a_k, b_k \in \ZZ. $$
\item $D_k\letbe \det \Lambda_{v_{k}}= \det[\lambda(F_k)^{t}\ \lambda(F_{k+1})^{t}]= a_k b_{k+1} - b_k a_{k+1}$.
\end{itemize}
Here, we identify the index $m+1$ with $1$. Hence, $v_m=F_m\cap F_{1}$ and $D_m=a_m b_{1} - b_m a_{1}$. 

\begin{figure}
\begin{tikzpicture}[scale=0.4]
\begin{scope}
\draw[fill=yellow] (0,0) to [out=60, in=180] (3,2) to [out=0, in=120] (6,0) to [out=240, in=0] (3,-2) to [out=180, in=300] (0,0);
\filldraw (0,0) circle (3pt);
\filldraw (6,0) circle (3pt);
\node[left] at (0,0) {\footnotesize$v_1$};
\node[right] at (6,0) {\footnotesize$v_2$};
\node[below] at (3, 2) {\footnotesize$F_1$};
\node[above] at (3,-2) {\footnotesize$F_2$};
\node[above] at (3,2) {\scriptsize$a_1\epsilon_1 +  b_1\epsilon_2$};
\node[below] at (3,-2) {\scriptsize$a_2\epsilon_1 +  b_2\epsilon_2$};
\end{scope}

\begin{scope}[xshift=350, yshift=-30, scale=1.5]
\draw[fill=yellow] (-1/2,-1)--(1,2)--(5,2)--(6+1/2,-1);
\draw[fill] (1,2) circle [radius=0.1];
\draw[fill] (5,2) circle [radius=0.1];
\node[below] at (1.3,2) {\scriptsize$v_{k}$};
\node[below] at (4.6,2) {\scriptsize$v_{k-1}$};
\node[right] at (0,0) {\scriptsize$F_{k+1}$};
\node[below] at (3, 2) {\scriptsize$F_k$};
\node[left] at  (6,0)  {\scriptsize$F_{k-1}$};
\node[rotate=64] at (-0.5,0.5) {\scriptsize$a_{k+1}\epsilon_1 +  b_{k+1}\epsilon_2$};
\node[rotate=-62] at (6.5,0.5) {\scriptsize$a_{k-1}\epsilon_1 +  b_{k-1}\epsilon_2$};
\node[above] at (3, 2) {\scriptsize$a_{k}\epsilon_1 +  b_{k}\epsilon_2 $};
\end{scope}

\end{tikzpicture}
\caption{Facets and vertices of $2$-gon and $m(\geq 3)$-gon.}
\label{fig_char_ftn_for_polygon}
\end{figure}
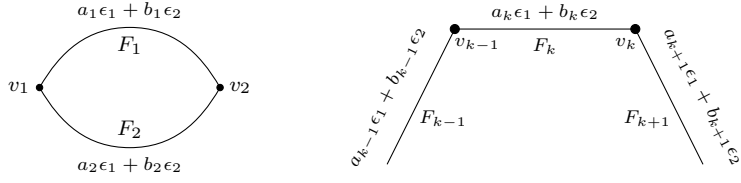

Following the result of \cite[Corollary 4.1]{KMZ}, 
$H^\ast(X)$ is concentrated in even degrees if and only if 
\begin{equation}\label{eq_iff_cond_for_Hodd=0_in_4_dim}
{\rm span}_\ZZ \{ a_k\epsilon_1 +  b_k\epsilon_2 \mid k=1, \dots , m\} \,=\, H_2(BT), 
\end{equation}
or equivalently, 
\begin{equation}\label{eq_iff_cond_for_Hodd=0_in_4_dim_BSS}
{\rm gcd} \{D_1, \dots, D_m\}\,=\,1,
\end{equation}
see \cite[Example 4.4]{BSS} and~\cite[Section~4.1]{BNSS} for a more general discussion.

Hence, we may apply Theorem \ref{main-thm1} to $4$-dimensional torus orbifolds
satisfying \eqref{eq_iff_cond_for_Hodd=0_in_4_dim} or \eqref{eq_iff_cond_for_Hodd=0_in_4_dim_BSS}. 
Preceding the computation of equivariant cohomology, we first introduce 
a certain $GL_2(\RR)$-representation, which will play an important role 
in our calculation.

%

\subsection{A $GL_2(\RR)$-representation}
In this subsection, we  identify $GL_2(\RR)$ with the general linear group $GL(H_2(BT^2;\RR))$ by taking $\{\epsilon_1, \epsilon_2\}$ as a basis of $H_2(BT^2;\RR)$. 
Let  $V_n\colonequals \bigoplus_{t=0}^n\RR r^{n-t}s^t$ be a vector space of homogeneous polynomials of degree $n$ with variables $r$ and $s$. Then, $V_n$ is isomorphic to $H^{2n}(BT^2;\RR)$ as a vector space by associating $r, s$ with $\epsilon_1^\ast, \epsilon_2^\ast \in H^2(BT^2;\RR)$, respectively. 
Observe that $V_n$ is a $GL_2(\RR)$-representation space with respect to the action $\Psi \colon GL_2(\RR) \times V_n \to V_n$ defined by 
$$
\Psi \left(\begin{bmatrix} a & b \\ c & d\end{bmatrix} , f(r,s)\right) \,=\, f(ar+bs, cr+ds),
$$
which gives a group homomorphism 
\begin{equation}\label{eq_sharp_operation}
\Phi^{(n)}  \colon GL_2(\RR) \lra GL(V_n).
\end{equation}
Note that $\Phi^{(1)}$ is the identity with respect to the basis $\{\epsilon_1, \epsilon_2\}$ of $H_2(BT^2;\RR)$ and its dual basis $\{\epsilon_1^\ast, \epsilon_2^\ast\}$ of $H^2(BT^2;\RR)\cong V_1$.

\begin{example}\label{ex_when_n=2}
When $n=2$, the ordered basis $\{r^2, rs, s^2\}$ of $V_2$ gives the following matrix presentation of \eqref{eq_sharp_operation} 
$$\Phi^{(2)}\left(\begin{bmatrix} a & b \\ c & d \end{bmatrix} \right)=
\begin{bmatrix} a^2&		2ab&		b^2\\
ac&ad+bc&bd\\
c^2&2cd&d^2\end{bmatrix}.$$
Indeed, 
\begin{align*}
\Psi \left(\begin{bmatrix} a & b \\ c & d\end{bmatrix} , r^2\right) &= (ar+bs)^2= a^2r^2 + 2ab rs + b^2s^2; \\
\Psi \left(\begin{bmatrix} a & b \\ c & d\end{bmatrix} , rs\right) &= (ar+bs)(cr+ds)= ac r^2 + (ad+bc) rs + bd s^2; \\
\Psi \left(\begin{bmatrix} a & b \\ c & d\end{bmatrix} , s^2\right) &=(cr+ds)^2 = c^2r^2+2cd rs + d^2 s^2.
\end{align*}
\end{example}

\subsection{Equivariant cohomology ring}

Each pair of adjacent facets $F_k, F_{k+1}$ defines a $(2\times 2)$-matrix 
\begin{align*}
\Lambda_{k} &\colonequals \left[\begin{array}{c|c} \lambda(F_k)  & \lambda(F_{k+1}) \end{array}\right]
= \left[\begin{array}{cc}a_k & a_{k+1} \\b_k & b_{k+1}\end{array}\right] \text{for } 1\leq k \leq m-1;\\
\Lambda_{m} &\colonequals  \left[\begin{array}{c|c} \lambda(F_m)  & \lambda(F_{1}) \end{array}\right]
= \left[\begin{array}{cc}a_m & a_{1} \\b_m & b_{1}\end{array}\right].
\end{align*}
Using the $(n+1)\times (n+1)$ matrix  $\Phi^{(n)}(\Lambda_k)$, we define a square matrix $\tilde\Lambda_k^{(n)}$ of size 
$(nm \times nm)$ as follows:
\begin{align*}
\tilde\Lambda_1^{(n)}& \colonequals  
\left[ \begin{array} {c|c}
&\\
\Phi^{(n)}(\Lambda_{1}) & \mathbf{0}\\ 
& \\ \hline 
& \\
\mathbf{0} & I_{n(m-1)-1}\\
& 
\end{array}\right]; \\
\tilde\Lambda_k^{(n)}& \colonequals 
\left[ \begin{array}{c|c|c}
&&\\
I_{n(k-1)} & \mathbf{0}&\mathbf{0} \\
&&\\ \hline
&&\\
\mathbf{0}&\Phi^{(n)}(\Lambda_{k})&\mathbf{0}\\
&&\\ \hline
&&\\
\mathbf{0}&\mathbf{0}&I_{n(m-k)-1}\\
&&\end{array}\right],\quad \text{for } 2\leq k \leq m-1;\\
\tilde\Lambda_m^{(n)}& \colonequals 
\left[\begin{array}{c|ccc|ccc}
 &&& &&& \\
\mathbf{0} &&I_{n(m-1)-1}&&&\mathbf{0}& \\
&&&&&&\\ \hline 
&&&&&&\\
\mathfrak{s}_{n+1}&&\mathbf{0}&&\mathfrak{s}_1&\cdots&\mathfrak{s}_n\\
&&&&&&
\end{array}\right],
\end{align*}
where $\mathbf{0}$ and $\mathfrak{s}_i$  denote the zero matrix of size fitting in the block and the $i$-th column of $\Phi^{(n)}(\Lambda_{m})$, respectively. We define $\mathbb{L}_k^{\deg 2n}$ to be the sublattice of $\ZZ^{nm}$ spanned by the row vectors of $\tilde{\Lambda}_k^{(n)}$.

%
%
\begin{theorem}\label{thm_4-dim_equiv_cohom}
Let $X\letbe X(Q, \lambda)$ be a 4-dimensional torus orbifold satisfying condition
\eqref{eq_iff_cond_for_Hodd=0_in_4_dim} or \eqref{eq_iff_cond_for_Hodd=0_in_4_dim_BSS}. 
Then $H^\ast_T(X)$ is generated by the union of 
\begin{enumerate}
\item[(i)] $\left\{  \sum_{1\leq i \leq m} c_{i}x_{F_i} 
~\Big| ~ (c_1, \dots, c_m)\in \bigcap_{k=1}^m \mathbb{L}_{k}^{\deg 2}\right\}$;
\item[(ii)] $\left\{  \sum_{\substack{t=0, 1 \\ 1\leq i \leq m }} c_{t, i}x_{F_i}^{2-t} x_{F_{i+1}}^t
~\Big| ~ (c_{0,1},c_{1,1}, c_{0,2}, c_{1,2},  \dots, c_{1,m})\in \bigcap_{k=1}^m\mathbb{L}_{k}^{\deg 4}\right\}$. \label{item_deg4}
\end{enumerate}
\end{theorem}
\begin{proof}
The orbifold torus graph $(\Gamma, \alpha)$ associated to 
$(Q, \lambda)$ is given by  
$$\mc{V}(\Gamma)\,=\,\mc{F}^{(0)}\,=\,\{v_1, \dots, v_m\} \quad\text{and}\quad
\mc{E}(\Gamma)\,=\,\{e_k, \bar{e}_k\mid k=1, \dots, m\},$$ 
where $e_k=F_k$ with initial vertex $v_{k-1}$.
The system of equations \eqref{eq_how_to_determine_axial_ftn} yields 
\begin{align*}
\alpha(e_k)\,=\,\frac{1}{D_{k-1}}\left(b_k \epsilon_1^\ast - a_k \epsilon_2^\ast\right)
\quad\text{and}\quad 
\alpha(\bar{e}_k)\,=\,\frac{1}{D_{k}}\left(-b_k \epsilon_1^\ast + a_k \epsilon_2^\ast\right),
\end{align*}
which gives the associated orbifold torus graph $(\Gamma, \alpha)$,
see Figure \ref{fig_polygon_char_pair_graph}. 
\begin{figure}
\begin{tikzpicture}[scale=0.8]
\draw (0,0)--(1,2)--(5,2)--(6,0);
\draw[fill] (1,2) circle [radius=0.1];
\draw[fill] (5,2) circle [radius=0.1];
\node[left] at (1,2) {\scriptsize$v_{k}$};
\node[right] at (5,2) {\scriptsize$v_{k-1}$};

\draw[very thick, ->] (1,2)--(2,2); \node[below] at (2,1.9) {\scriptsize${\bar{e}_k}$};
\draw[very thick, ->] (5,2)--(4,2); \node[below] at (4,1.9) {\scriptsize${e_k}$};
\draw[very thick, ->] (1,2)--(0.5,1); \node[right] at (0.7,1) {\scriptsize${e_{k+1}}$};
\draw[very thick, ->] (5,2)--(5.5,1); \node[right] at (5.6,1) {\scriptsize${\bar{e}_{k-1}}$};

\node[above] at (5,2.2) {\scriptsize$\frac{1}{D_{k-1}} \left( b_k \epsilon_1^\ast -a_k \epsilon_2^\ast \right)$};
\node[above] at (1,2.2) {\scriptsize$\frac{1}{D_{k}} \left( -b_k \epsilon_1^\ast +a_k \epsilon_2^\ast \right)$};
\node[below] at (6,0.9) {\scriptsize$\frac{1}{D_{k-1}} \left( -b_{k-1} \epsilon_1^\ast +a_{k-1} \epsilon_2^\ast \right)$};
\node[below] at (0,0.9) {\scriptsize$\frac{1}{D_{k}} \left( b_{k+1} \epsilon_1^\ast -a_{k+1} \epsilon_2^\ast \right)$};

\end{tikzpicture}
\caption{The orbifold torus  graph.}
\label{fig_polygon_char_pair_graph}
\end{figure}

In the associated weighted ring $\ZZ[\Gamma, \alpha]=\ms{Z}_{\Gamma, \alpha}/\mc{I}$, 
the ideal $\mc{I}$ allows us to express an arbitrary element of degree $2n$ 
in $\ZZ[\Gamma, \alpha]$ by 
\begin{equation}\label{eq_arb_elem_in_w_face_ring}
 \sum_{t=0}^{n-1}\sum_{i=1}^m c^{(n)}_{t, i}x_{F_i}^{n-t}x_{F_{i+1}}^t. 
\end{equation}
Recall from \eqref{eq_subcollection_integrality_condition} that 
an element \eqref{eq_arb_elem_in_w_face_ring} is indeed an element of $\ZZ[\Gamma, \alpha]$
if and only if 
\begin{equation}\label{eq_4_dim_gen_elem}
\left(
c_{0,k}^{(n)}\tau_{F_k}^n + c_{1, k}^{(n)}\tau_{F_k}^{n-1}\tau_{F_{k+1}} + \cdots 
+ c_{n-1, k}^{(n)}\tau_{F_k}\tau_{F_{k+1}}^{n-1} + c_{0, k+1}^{(n)}\tau_{F_{k+1}}^n\right) \Big|_{v_k} 
\end{equation}
is an element of $H^{2n}(BT)$ for each vertex $v_k$, $k=1, \dots , m$. 
Notice that the restrictions of the rational Thom classes $\tau_{F_k}$ and 
$\tau_{F_{k+1}}$ to $v_k$ are given by 
\begin{equation}\label{eq_rat_Thom_rest_to_v_k}
\tau_{F_k}\big|_{v_k}\,=\,\frac{1}{D_{k}} \left( b_{k+1} \epsilon_1^\ast -a_{k+1} \epsilon_2^\ast \right) \quad\text{and}\quad 
\tau_{F_{k+1}}\big|_{v_k}\,=\, \frac{1}{D_{k}} \left( -b_k \epsilon_1^\ast +a_k \epsilon_2^\ast \right), 
\end{equation}
respectively. Hence, by identifying
$$H^{2n}(BT)\,\cong\, \bigoplus_{t=0}^{n}\ZZ (\epsilon_1^\ast)^{n-t}(\epsilon_2^\ast)^t,$$
and plugging \eqref{eq_rat_Thom_rest_to_v_k} into 
\eqref{eq_4_dim_gen_elem}, we conclude that \eqref{eq_4_dim_gen_elem} is an element 
of $H^{2n}(BT)$ if and only if 
$$(c_{0, k}^{(n)}, \dots, c_{n-1, k}^{(n)}, c_{0, k+1}^{(n)}) \cdot 
\Phi^{(n)}\left( \Lambda_k^{-1} \right) \in \ZZ^{n+1},$$
which is equivalent to saying that 
$(c_{0, k}^{(n)}, \dots, c_{n-1, k}^{(n)}, c_{0, k+1}^{(n)})$ is an element in the sublattice of $\ZZ^{n+1}$ 
spanned by the row vectors of $\Phi^{(n)}(\Lambda_k)$. 
It is again equivalent to say that 
\begin{equation}\label{eq_all_entries_in_4-dim}
(c_{0,1}^{(n)}, \dots, c_{n-1,1}^{(n)}, \dots, c_{0,k}^{(n)}, \dots, c_{n-1, k}^{(n)}, \dots, c_{0,m}^{(n)}, \dots,  c_{n-1,m}^{(n)}) 
\end{equation}
is an element of $\mathbb{L}_{k}^{\deg 2n}$, because there is no condition for entries in \eqref{eq_all_entries_in_4-dim} except for $c_{0,k}^{(n)}, \dots, c_{n-1, k}^{(n)}$ and $c_{0, k+1}^{(n)}$. 
The same computation for all other vertices concludes that \eqref{eq_all_entries_in_4-dim} 
has to be an element of $\bigcap_{k=1}^m \mathbb{L}^{\deg 2n}_k$. 

Finally, the assumption \eqref{eq_iff_cond_for_Hodd=0_in_4_dim} or 
\eqref{eq_iff_cond_for_Hodd=0_in_4_dim_BSS} implies that $H^\ast(X)$ is free over $\ZZ$. 
Moreover, the second assertion of Theorem \ref{thm_main_thm_of_paper} concludes that 
$\ZZ[\Gamma, \alpha]$ is generated by elements of degree less than or equal to $4$. In degree $2$,  \eqref{eq_arb_elem_in_w_face_ring} becomes
$\sum_{i=1}^m c^{(1)}_{0, i}x_{F_i}$ and its coefficients satisfy $(c^{(1)}_{0, 1}, \dots, c^{(1)}_{0, m})\in \bigcap_{k=1}^m\mathbb{L}_k^{\deg2}$, which coincides with (i) by writing $c_i\colonequals c_{0,i}^{(1)}$ for $i=1, \dots, m$. In degree $4$, an element \eqref{eq_arb_elem_in_w_face_ring} can be written by $\sum_{\substack{t=0,1 \\ 1 \leq i \leq m}}c^{(2)}_{t,i}x_{F_i}^{2-t}x_{F_i+1}^t$ such that $(c^{(2)}_{0,1}, c^{(2)}_{1,1}, c^{(2)}_{0,2},c^{(2)}_{1,2},\dots, c^{(2)}_{0,m},c^{(2)}_{1,m}) \in \bigcap_{k=1}^m\mathbb{L}_k^{\deg4}$. This also agrees with (ii) by writing $c_{t,i}\colonequals c^{(2)}_{t,i}$ for $t=0,1$ and $i=1, \dots, m$. Hence, the proof is completed. 

\end{proof}

We finish this paper by considering a specific example: the $4$-dimensional torus orbifold\footnote{It is also a \emph{toric orbifold} in the sense of \cite[Section 7]{DJ}.} associated to a Cartan matrix of type $A$, see Figure \ref{fig_toric_orb_cartan_matrix} for the characteristic pair $(Q, \lambda)$. We refer the readers to \cite{Bru}. 

Notice that $(Q, \lambda)$ in Figure \ref{fig_toric_orb_cartan_matrix} satisfies \eqref{eq_iff_cond_for_Hodd=0_in_4_dim} and \eqref{eq_iff_cond_for_Hodd=0_in_4_dim_BSS}. Hence the cohomology of the  corresponding torus orbifold $X(Q, \lambda)$ is torsion free and concentrated in even degrees. In Example \ref{ex_toric_orb_cartan_mat}, we apply Theorem \ref{thm_main_thm_of_paper}, in particular Theorem \ref{thm_4-dim_equiv_cohom}, to calculate $H_T^\ast(X(Q, \lambda))$. We can also apply Corollary \ref{cor_sing_cohom} to obtain $H^\ast(X(Q,\lambda))$. In Example \ref{ex_toric_orb_cartan_mat_sing_cohom}, we calculate explicit generators of $H^\ast(X(Q,\lambda))$ and their relations. 

\begin{figure}
\begin{tikzpicture}
\draw[fill=yellow] (0,0)--(2,0)--(2,2)--(0,2)--cycle;
\draw[fill] (0,2) circle [radius=0.05]; 
\draw[fill] (2,2) circle [radius=0.05];
\draw[fill] (2,0) circle [radius=0.05];
\draw[fill] (0,0) circle [radius=0.05];

\node[left] at (0,0) {\scriptsize$v_1$};
\node[right] at (2,0) {\scriptsize$v_2$};
\node[right] at (2,2) {\scriptsize$v_3$};
\node[left] at (0,2) {\scriptsize$v_4$};

\node[right] at (0,1) {\scriptsize$F_1$};
\node[above] at (1,0) {\scriptsize$F_2$};
\node[left] at (2,1) {\scriptsize$F_3$};
\node[below] at (1,2) {\scriptsize$F_4$};

\node[left] at (0,1) {\scriptsize$-2\epsilon_1 + \epsilon_2$};
\node[below] at (1,0) {\scriptsize$\epsilon_1 -2 \epsilon_2$};
\node[right] at (2,1) {\scriptsize$\epsilon_1$};
\node[above] at (1,2) {\scriptsize$\epsilon_2$};

\end{tikzpicture}
\caption{A characteristic pair.}
\label{fig_toric_orb_cartan_matrix}
\end{figure}

\begin{example}[Equivariant cohomology]\label{ex_toric_orb_cartan_mat}
To apply Theorem \ref{thm_4-dim_equiv_cohom}, we begin with calculating matrices $\tilde\Lambda_k^{(n)}$  for $n=1,2$, and $1\leq k \leq 4$.  
When $n=1$, the homomorphism $\Phi^{(1)}\colon GL_2(\RR) \to GL_2(\RR)$ is the identity. Hence, we have 
\begin{align*}
\begin{array}{ll}
\tilde\Lambda_1^{(1)}=\left[ \begin{array}{rr|rr}
-2 &1 &0 &0 \\
1 &-2& 0& 0 \\ \hline
0 &0 &1 &0 \\
0 & 0& 0& 1
\end{array}\right] 
& 
\tilde\Lambda_2^{(1)}=\left[ \begin{array}{r|rr|r}
1 & 0 & 0 & 0 \\\hline
0 &1 &1 &0 \\
0 &-2& 0& 0 \\ \hline
0 &0&0&1
\end{array}\right], 
\\
& 
\\
\tilde\Lambda_3^{(1)}=\left[ \begin{array}{rr|rr}
1 & 0&0&0\\
0&1&0&0\\ \hline
 0&0&1 &0 \\
0&0& 0& 1 
\end{array}\right] ,
&   
\tilde\Lambda_4^{(1)}=\left[ \begin{array}{r|rr|r}
0&1&0&0\\
0&0&1&0\\ \hline
-2 &0 &0 &0 \\
1 &0& 0& 1 \end{array}\right]. 
\end{array}
\end{align*}
Now, we have sublattices $\mathbb{L}_k^{\deg 2}$ of $\ZZ^4$ generated by the row vectors of $\tilde\Lambda_k^{(1)}$ for each $1\leq k \leq 4$,
whose intersection $\bigcap_{k=1}^4 \mathbb{L}_k^{\deg 2}$ can be generated by 
\begin{equation}\label{eq_deg_2_generators}
\{(-2, 1, 1, 0), (1, -2, 0,1), (0,0,2,0), (0,0,0,2)\}.
\end{equation}
Hence, $H_T^2(X(Q, \lambda))$ is generated by 
\begin{align}
\begin{split}\label{eq_deg2_generator_zeta}
\zeta_1&\colonequals -2x_{1}+x_{2}+x_{3},\\
\zeta_2&\colonequals x_{1}-2x_{2}+x_{4},\\
\zeta_3&\colonequals 2x_{3},\\
\zeta_4&\colonequals 2x_{4},
\end{split}
\end{align}
where we denote $x_i:=x_{F_i}$ for $1\leq i \leq 4$ for simplicity. We notice that $\{ \zeta_1, \zeta_2\}$ are elements described in \eqref{eq_global_elt}. 
\begin{remark}
An explicit example of \eqref{eq_deg_2_generators} needs some tedious calculation, or one can use a computer program, for example the module of ``Toric lattices'' of SAGE, see \cite{SAGE-lattice} and \cite{sage}.
\end{remark}

When $n=2$, using the computation in Example \ref{ex_when_n=2}, we have the following four $(8\times 8)$-matrices
\begin{align*}
\begin{array}{ll}
\tilde\Lambda_1^{(2)}=\left[ \begin{array}{rrr|rrr}
4& -4& 1& & &  \\
-2& 5& -2& &\mathbf{0} & \\
1& -4& 4& & & \\ \hline
&&&&&\\
&\mathbf{0}&&&I_5&\\
&&&&&
\end{array}\right],
&
\tilde\Lambda_2^{(2)}=\left[ \begin{array}{rrr|rrr|rrr}
&&&&&&&&\\
&I_2&&&\mathbf{0}&&&\mathbf{0}&\\
&&&&&&&&\\  \hline
&&&  1&  2&  1&  &  &  \\
&\mathbf{0}&& -2& -2&  0&  & \mathbf{0} &  \\
&&&  4&  0&  0&  &  &   \\ \hline
&&&&&&&& \\
&\mathbf{0}&&&\mathbf{0}&&&I_3& \\
&&&&&&&& \\
\end{array}\right],
\\
\tilde\Lambda_3^{(2)}=\left[ \begin{array}{rrr|rrr|r}
&&&&&&\\
&I_4&&&\mathbf{0}&&\mathbf{0}\\
&&&&&&\\ \hline
&&&1&0&0&\\
&\mathbf{0}&&0&1&0&\mathbf{0}\\
&&&0&0&1& \\ \hline
&\mathbf{0}&&&\mathbf{0}&&1
\end{array}\right],
&
\tilde\Lambda_4^{(2)}=\left[ \begin{array}{c|c|c}
&&\\
\mathbf{0}&I_5&\mathbf{0} \\
&&\\ \hline
\begin{array}{r} 4\\-2\\1 \end{array} & \mathbf{0} & \begin{array}{rr} 0&0\\0&-2\\1&-2\end{array}
\end{array}\right].
\end{array}
\end{align*}
Now, we have sublattices $\mathbb{L}_k^{\deg 4}$ of $\ZZ^8$ generated by row vectors of $\tilde\Lambda_k^{(2)}$ above for $1\leq k\leq 4$, respectively. 
Finally, one can find generators of the intersection $\bigcap_{k=1}^4 \mathbb{L}_4^{\deg 4}$ as follows:
\begin{align*}
\{(1, 2, 1, 0, 3, 0, 1, 2), (0, 3, 3, 0, 1, 0, 0, 0), (0, 0, 9, 0, 3, 0, 0, 0), (0, 0, 0, 2, 2, 0, 0, 0), \\(0, 0, 0, 0, 4, 0, 0, 0), (0, 0, 0, 0, 0, 1, 0, 0), (0, 0, 0, 0, 0, 0, 2, 2), (0, 0, 0, 0, 0, 0, 0, 4)\},
\end{align*}
which means that $H_T^4(X(Q, \lambda))$ is generated by 
\begin{equation}\label{eq_deg4_generators_eta}
\begin{array}{l}
\eta_1 \colonequals x_{1}^2 + 2x_{1}x_{2} +x_{2}^2 + 3x_{3}^2 +x_{4}^2+2x_{1}x_{4},\\
\eta_2 \colonequals 3x_{1}x_{2}+ 3x_{2}^2 +x_{3}^2,\\
\eta_3 \colonequals 9x_{2}^2+3x_{3}^2,\\
\eta_4 \colonequals 2x_{2}x_{3} + 2x_{3}^2,\\
\eta_5 \colonequals 4x_{3}^2,\\
\eta_6 \colonequals x_{3}x_{4},\\
\eta_7 \colonequals 2x_{4}^2+ 2x_{1}x_{4},\\
\eta_8 \colonequals 4x_{1}x_{4}.
\end{array}
\end{equation}

%
\end{example}

\begin{remark}
One can also interpret generators $\zeta_1, \dots, \zeta_4$ in \eqref{eq_deg2_generator_zeta} and  $\eta_1, \dots \eta_8$ in \eqref{eq_deg4_generators_eta} as elements in $H^\ast_T(\Gamma, \alpha)$ via Theorem \ref{main-thm1}. Recall that the map $\nu$ in \eqref{eq_comm_diag_face_ring_graph_cohom} sends $x_{F_i}$ to $\tau_{F_i}$ for $i=1, \dots, 4$, which are illustrated in Figure \ref{fig_rational_thom_for_cartan_orb}. For instance, 
\[
\nu(\eta_8)(v)\,=\,4\tau_{F_1}\tau_{F_4}(v)\,=\,\begin{cases} -2(\ep_1^\ast)^2 - 4\ep_1^\ast\ep_2^\ast,  & \text{if } v=F_1\cap F_4;\\
0, & \text{otherwise.}
\end{cases}
\]
One can immediately see that $\nu(\eta_8)$ above  satisfies the condition for $H_{T}^\ast(\Gamma, \alpha)$ in Definition \ref{def_equiv_cohom_of_graph}. 
\end{remark}

\begin{figure}
\begin{tikzpicture}[scale=0.9]

\draw[dotted] (0,0)--(2,0)--(2,2)--(0,2)--cycle;
\node[right] at (0,1) {$\tau_{F_1}$};
\draw[thick] (0,0)--(0,2);
\node[above right] at (-0.2,2) {\scriptsize$-\ep_1^\ast$};
\node[below right] at (-0.2,0) {\scriptsize$-\frac{2}{3}\ep_1^\ast-\frac{1}{3}\ep_2^\ast$};

\draw[fill] (0,2) circle [radius=0.05]; 
\draw[fill] (0,0) circle [radius=0.05];

\begin{scope}[xshift=100]
\draw[dotted] (0,0)--(2,0)--(2,2)--(0,2)--cycle;
\node[below] at (1,0) {$\tau_{F_2}$};

\draw[thick] (0,0)--(2,0);
\node[above] at (0,0) {\scriptsize$-\frac{1}{3}\ep_1^\ast-\frac{2}{3}\ep_2^\ast$};
\node[above] at (2,0) {\scriptsize$-\frac{1}{2}\ep_2^\ast$};

\draw[fill] (2,0) circle [radius=0.05];
\draw[fill] (0,0) circle [radius=0.05];
\end{scope}

\begin{scope}[xshift=200]
\draw[dotted] (0,0)--(2,0)--(2,2)--(0,2)--cycle;
\node[left] at (2,1) {$\tau_{F_3}$};

\draw[thick] (2,0)--(2,2);
\node[above left] at (2.2,2) {\scriptsize$\ep_1^\ast$};
\node[below left] at (2.2,0) {\scriptsize$\ep_1^\ast+\frac{1}{2}\ep_2^\ast$};

\draw[fill] (2,2) circle [radius=0.05];
\draw[fill] (2,0) circle [radius=0.05];
\end{scope}

\begin{scope}[xshift=300]
\draw[dotted] (0,0)--(2,0)--(2,2)--(0,2)--cycle;
\node[above] at (1,2) {$\tau_{F_4}$};

\draw[thick] (0,2)--(2,2);
\node[below] at (0,2) {\scriptsize$\frac{1}{2}\ep_1^\ast+\ep_2^\ast$};
\node[below] at (2,2) {\scriptsize$\ep_2^\ast$};

\draw[fill] (0,2) circle [radius=0.05]; 
\draw[fill] (2,2) circle [radius=0.05];
\end{scope}
\end{tikzpicture}
\caption{Rational Thom classes}
\label{fig_rational_thom_for_cartan_orb}
\end{figure}


\begin{example}[Singular cohomology]\label{ex_toric_orb_cartan_mat_sing_cohom}
Recall that $H^\ast(X(Q, \lambda))\cong \ZZ[\Gamma, \alpha]/\mathcal{J}$ by Corollary \ref{cor_sing_cohom}. Here, $\mathcal{J}$ is the ideal generated by $\{\zeta_1, \zeta_2\}$, 
see Example \ref{ex_toric_orb_cartan_mat}. To obtain the minimal number of generators and relations, we may choose $y\colonequals \zeta_3$ and $z\colonequals \zeta_4$ as generators of $H^2(X(Q, \lambda))$.

Moreover, one can see 
\begin{align}
&\eta_1=\eta_3=\eta_4=\eta_7=0,\label{eq_rel1}\\
&-\eta_5=\eta_8=2\eta_2=2\eta_6. \label{eq_rel2}
\end{align}
modulo two ideals $ \mathcal{I}$ and $\mathcal{J}$. 
To see  \eqref{eq_rel1} and \eqref{eq_rel2} explicitly,  the following relation 
\begin{equation}\label{eq_implicit_relation}
x_{1}x_{2}=2x_{1}^2=2x_{2}^2.
\end{equation}
may be helpful, which can be obtained by the following computation. 
\begin{align*}
x_{2}^2&=(2x_{1}-x_{3})x_{2} & & (\text{by }\zeta_1=-2x_{1}+x_{2}+x_{3}=0) \\
&=2(2x_{2}-x_{4})x_{2}-x_{2}x_{3}&  & (\text{by }\zeta_2=x_{1}-2x_{2}+x_{4}=0)\\
&=4x_{2}^2-x_{2}x_{3}& &(\text{by }x_{2}x_{4}=0)\\
&=4x_{2}^2-x_{2}(2x_{1}-x_{2}) &&(\text{by }\zeta_1=-2x_{1}+x_{2}+x_{3}=0)\\
&=5x_{2}^2-2x_{1}x_{2}, &&
\end{align*}
which implies that $2x_{2}^2=x_{1}x_{2}$. Moreover, 
\begin{align*}
x_{1}x_{2}&=x_{1}(2x_{1}-x_{3}) && (\text{by } \zeta_1=-2x_{1}+x_{2}+x_{3}=0) \\
&=2x_{1}^2. & & (\text{by }x_{1}x_{3}=0)
\end{align*}
Combining these two, we have \eqref{eq_implicit_relation}. Hence, for instance 
\begin{align*}
\eta_1&=x_{1}^2 + 2x_{1}x_{2} +x_{2}^2 + 3x_{3}^2 +x_{4}^2+2x_{1}x_{4}\\
&=12x_{1}^2-10x_{1}x_{2}+8x_{2}^2\\
&=0,
\end{align*}
where the second and third equalities follow from the linear ideal $\mathcal{J}$ and \eqref{eq_implicit_relation}, respectively.  

Thus, we may put $w\colonequals \eta_2=\eta_6$ as  a generator of $H^4(X(Q, \lambda))$. Combining $w$ with degree $2$ generators, we have more relations:
$$y^2=z^2=-2w,~yz=4w,~yw=zw=0$$
modulo the ideal  $ \mathcal{I}+\mathcal{J}$. 
Indeed, these can be verified by the following computation, where we use the ideal $\mathcal{I}+\mathcal{J}$ and \eqref{eq_implicit_relation} again. 
\begin{align*}
y^2&=4x_{3}^2=4(2x_{1}-x_{2})^2=4(4x_{1}^2-4x_{1}x_{2}+x_{2}^2)=-12x_{1}^2,\\
z^2&=4x_{4}^2=4(-x_{1}+2x_{2})^2)=4(x_{1}^2-4x_{1}x_{2}+4x_{2}^2)=-12x_{1}^2,\\
w&=x_{3}x_{4}=(2x_{1}-x_{2})(-x_{1}+2x_{2})=6x_{1}^2, \\
yz&=4x_{3}x_{4}=4w,\\
yw&=2x_{3}(x_{3}x_{4})=2x_{3}\cdot 6x_{1}^2 =0, \\
zw&=2x_{4}(x_{3}x_{4})=2x_{4}\cdot 6x_{2}^2 =0.
\end{align*}

Finally, we conclude that 
$$H^\ast(X(Q, \lambda))\cong \ZZ[y,z,w]/\mathcal{K}$$
where $\deg y=\deg z=2, \deg w=4$ and $\mathcal{K}$ is the ideal generated by 
$$\{y^2+2w, z^2+2w, yz-4w, yw, zw\}.$$

\end{example}
%
%
%

\end{document}